\documentclass[11pt]{amsart}
\usepackage[margin=1in]{geometry}
\usepackage{amsthm, amsmath,amsfonts,amssymb,euscript,hyperref,graphics,color,slashed,mathrsfs}
\usepackage{graphicx}
\usepackage{comment}
\usepackage{import}
\usepackage{tikz}
\usepackage{latexsym}
\usepackage{mathtools}

\newtheorem{theorem}{Theorem}[section]
\newtheorem{lemma}[theorem]{Lemma}

\newtheorem{remark}[theorem]{Remark}

\setlength{\textwidth}{16cm} \setlength{\oddsidemargin}{0cm}
\setlength{\evensidemargin}{0cm}

\numberwithin{equation}{section}

\begin{document}
\title {At least two of $\zeta(5), \zeta(7), \ldots, \zeta(35)$ are irrational}

\author{Li Lai}
\address{Department of Mathematical Sciences, Tsinghua University \\ Beijing, China}
\email{lilaimath@gmail.com}

\author{Li Zhou}
\address{School of Mathematical Sciences, Fudan University \\ Shanghai, China}
\email{lzhou11@fudan.edu.cn}

\begin{abstract}
Let $\zeta(s)$ be the Riemann zeta function. We prove the statement in the title, which improves a recent result of Rivoal and Zudilin by lowering $69$ to $35$. We also prove that at least one of $\beta(2),\beta(4),\ldots,\beta(10)$ is irrational, where $\beta(s) = L(s,\chi_4)$ and $\chi_4$ is the Dirichlet character with conductor $4$.
\end{abstract}

\maketitle


\section{Introduction}
This paper mainly deals with the irrationality of values of the Riemann zeta function. It is well known (due to Euler and Lindemann) that $\zeta(2k)$ is transcendental for any positive integer $k$. A natural problem then is to investigate the irrationality of $\zeta(2k+1)$. More than four decades after Ap{\'e}ry's breakthrough that $\zeta(3) \notin \mathbb{Q}$ \cite{Ap79}, we still do not know any other single $\zeta(2k+1)$ to be irrational. (We refer the reader to the Bourbaki seminar notes \cite{Fi04} by Fischler in 2004 for a survey.) Nevertheless, there are some partial results after Ap{\'e}ry. In 2000 and 2001, Rivoal \cite{Ri00}, Ball and Rivoal \cite{BR01} showed that there are infinitely many positive integers $k$ such that $\zeta(2k+1)$ is irrational. During 2018 to 2020, starting with an idea of Zudilin, some new progress was made in \cite{FSZ19} and \cite{LY20}. Recently, Fischler \cite{Fi21} made a significant improvement on Ball-Rivoal's theorem.

Let us focus on $\zeta(2k+1)$ for small positive integers $k$. Rivoal \cite{Ri02} showed that at least one of $\zeta(5),\zeta(7),\ldots,\zeta(21)$ is irrational. In 2001, Zudilin achieved the following result. (See \cite{Zu04} for a detailed treatment.)
\begin{theorem}[Zudilin \cite{Zu01}]
At least one of $\zeta(5),\zeta(7),\zeta(9),\zeta(11)$ is irrational.
\end{theorem}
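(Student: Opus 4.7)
The approach is the Ap\'ery--Nesterenko--Rivoal scheme: construct an explicit sequence of $\mathbb{Q}$-linear forms
$$r_n = a_{0,n} + a_{5,n}\zeta(5) + a_{7,n}\zeta(7) + a_{9,n}\zeta(9) + a_{11,n}\zeta(11)$$
that are extremely small, show that after clearing a modest denominator they are genuine integers, and combine with a non-vanishing statement to force a contradiction with joint rationality. Concretely, I would set $r_n = \sum_{t=1}^{\infty} R_n(t)$, where $R_n(t)$ is a very-well-poised rational function of the general shape
$$R_n(t) = C_n \,\bigl(t+\tfrac{n}{2}\bigr)\,\frac{\prod_j (t - a_j n + 1)_{b_j n}}{(t)_{n+1}^{11}}$$
with integer parameters $a_j, b_j$ to be tuned. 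The crucial property required of $R_n$ is that its partial-fraction decomposition $\sum_{k,j} c_{k,j}/(t+k)^j$ satisfies $c_{k,j}=0$ for every \emph{even} $j$; this is the well-poisedness symmetry, and it collapses the naive linear form in $1,\zeta(2),\zeta(3),\ldots,\zeta(11)$ to one involving only $1$ and odd zeta values. The remaining free parameters are then used to kill the coefficient of $\zeta(3)$ as well, producing the four-variable form above.

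The analytic side is standard: a Barnes-type contour representation of $r_n$ followed by a saddle-point computation yields $|r_n|^{1/n} \to \rho$ and $\max_k |a_{k,n}|^{1/n} \to \sigma$ for explicit constants $\rho, \sigma$ depending on the $(a_j, b_j)$. The arithmetic side is where the real work lies. A first-pass denominator bound gives only $d_n^{11} \cdot a_{k,n} \in \mathbb{Z}$ with $d_n = \operatorname{lcm}(1,\ldots,n) \sim e^n$, which is far too weak. The decisive ingredient is a Zudilin-style $p$-adic analysis of the partial-fraction coefficients, exploiting the binomial-coefficient structure of $R_n(t)$: one shows that for primes $p$ lying in carefully delimited intervals determined by the $a_j$, the $p$-adic valuation of each $c_{k,j}$ is already high enough to remove $p$ from the denominator. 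Together with a factor-of-$2$ saving from the $(t + n/2)$ symmetry, this reduces the effective denominator to something of the form $2\,d_{\lfloor \alpha n \rfloor}^{10}$ with $\alpha < 1$.

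Assuming $\zeta(5), \zeta(7), \zeta(9), \zeta(11) \in \mathbb{Q}$ with common denominator $Q$, the sequence $Q \cdot 2 \, d_{\lfloor \alpha n \rfloor}^{10} \cdot r_n$ lies in $\mathbb{Z}$ and tends to $0$ provided $\rho \cdot e^{10\alpha} < 1$; a separate contour-deformation argument produces an integral formula for $r_n$ whose integrand has constant sign on the relevant contour, showing $r_n \ne 0$ for all large $n$, which yields the contradiction. The principal obstacle is a \emph{joint} optimization: the inequality $\rho \cdot e^{10\alpha} < 1$ turns out to be tight, so one must simultaneously tune the integer parameters $(a_j, b_j)$ to minimise $\rho$ while the $p$-adic argument imposes constraints linking them to $\alpha$ and to the growth $\sigma$ of the coefficients. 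Finding a parameter vector that lands inside the admissible region is precisely what makes the statement non-trivial, and without the $p$-adic refinement on the denominators the method falls short.
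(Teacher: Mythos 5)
The paper does not prove this theorem: it is quoted as an external input due to Zudilin, with the proof in \cite{Zu01} (detailed in \cite{Zu04}), and it is used in Section~4 only to guarantee $i_2 \in \{5,7,9,11\}$. So there is no in-paper proof to match; the relevant comparison is with Zudilin's actual argument, whose defining feature the present paper explicitly names.

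Your proposal has the right skeleton --- a very-well-poised hypergeometric rational function, partial-fraction decomposition, a symmetry to eliminate even zeta values, a $p$-adic $\Phi_n$-type saving on the denominators, saddle-point asymptotics, non-vanishing, and a contradiction --- but the step that actually removes $\zeta(3)$ is the crux, and there the proposal contains a genuine gap. You say that ``the remaining free parameters are then used to kill the coefficient of $\zeta(3)$ as well.'' With fixed integer parameters $a_j, b_j$, the coefficient of $\zeta(3)$ is $\rho_3(n) = \sum_k c_{k,3}$, a concrete rational number for each $n$; there is no integer tuning of the parameters that makes it vanish identically, and the arithmetic lemma requires the parameters to be integral. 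Zudilin's actual mechanism --- which the paper states in its introduction is ``taking twice derivatives,'' developed in \cite{Ri02,Zu01} --- is to apply $\tfrac{1}{2}\tfrac{d^2}{dt^2}$ to the rational function before summing. Differentiation twice raises each pole order by two, so the new coefficient at order $3$ is (up to a factor) the old coefficient at order $1$, and the convergence of $\sum_t R_n(t)$ already forces $\sum_k c_{k,1} = 0$; hence the $\zeta(3)$-coefficient of the differentiated form vanishes automatically. Your proposal does not invoke this (nor the alternative ``inserting rational roots'' mechanism, which is what the present paper uses for its own Theorem~\ref{maintheorem}); as written, the linear form you would obtain still carries $\zeta(3)$, and the argument would only reprove Ap\'ery's theorem.

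A secondary inaccuracy: the well-poised symmetry $R_n(t) = -R_n(-t-c)$ does not force $c_{k,j} = 0$ for every even $j$; it gives $c_{k,j} = (-1)^{j+1} c_{c-k,j}$, so only the aggregated sums $\sum_k c_{k,j}$ (the coefficients of $\zeta(j)$) vanish for even $j$, while individual $c_{k,j}$ are generally nonzero. This distinction matters because the $p$-adic denominator analysis must control every $c_{k,j}$, not just the combinations $\rho_j$. The claimed ``factor-of-$2$ saving from the $(t+n/2)$ symmetry'' is also not how the $(2t+m_2 n)$ factor is used; its role is to enforce the odd symmetry, not to save a power of~$2$.
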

On a different aspect, as a companion result in \cite{BR01}, Ball and Rivoal showed that there exists an odd integer $i \in [5,169]$ such that $1,\zeta(3),\zeta(i)$ are linearly independent over $\mathbb{Q}$. In 2010, Fischler and Zudilin \cite{FZ10} lowered $169$ to $139$ by refining Nesterenko's linear independence criterion.

Recently, Rivoal and Zudilin \cite{RZ20} showed that there are at least two irrational numbers amongst $\zeta(5),\zeta(7),\ldots,\zeta(69)$.  We remark that the authors of \cite{RZ20} did not pursue the full strength of their method for simplicity. In this paper we take a different approach to prove the following result.

\begin{theorem}\label{maintheorem}
At least two of $\zeta(5),\zeta(7),\ldots,\zeta(35)$ are irrational.
\end{theorem}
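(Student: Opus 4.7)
The strategy will follow the Ball--Rivoal/Rivoal--Zudilin paradigm: for each positive integer $n$, construct an explicit integer linear form
\[
L_n \;=\; p_{0,n} + p_{5,n}\zeta(5) + p_{7,n}\zeta(7) + \cdots + p_{35,n}\zeta(35),
\]
and apply Nesterenko's linear independence criterion. Recall that if $\max_{i}|p_{i,n}| \leq \beta^{\,n+o(n)}$ and $0 < |L_n| \leq \alpha^{-n+o(n)}$ with $\log\alpha/\log\beta > 2$, then the $\mathbb{Q}$-span of $\{1,\zeta(5),\zeta(7),\ldots,\zeta(35)\}$ has dimension at least $3$, and Theorem~\ref{maintheorem} follows immediately. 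So the goal is to produce small linear forms that beat the threshold $\alpha > \beta^{2}$.

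The first step is to write down a rational function of very-well-poised hypergeometric shape,
\[
R_n(t) \;=\; n!^{\,A}\,\bigl(t+\tfrac{n}{2}\bigr)\,\frac{\prod_{i=1}^{r}\bigl(t-a_{i}n\bigr)_{n+1}\bigl(t+(a_{i}+1)n\bigr)_{n+1}}{\prod_{j=0}^{b}\bigl(t+j\bigr)_{n+1}^{\,m_{j}}},
\]
with integer parameters chosen so that (a) the degree at infinity forces $R_n(t)=O(t^{-2})$, making $S_n := \sum_{t\geq 1} R_n(t)$ convergent; (b) the central symmetry $R_n(-n-t) = \pm R_n(t)$ kills the coefficients of $\zeta(2k)$ and of $\zeta(3)$ in the partial-fraction expansion of $S_n$; and (c) the highest pole order is $17$, so that the only zeta values appearing are $\zeta(5),\zeta(7),\ldots,\zeta(35)$. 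The linear form $L_n$ is then read off from the partial-fraction decomposition of $R_n(t)$.

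Next, I would carry out the arithmetic and analytic analyses in parallel. On the arithmetic side, expand $R_n(t)$ in partial fractions, extract the integer coefficients modulo a denominator $\Delta_n$ built from powers of $d_n=\mathrm{lcm}(1,\ldots,n)$, and sharpen this via Zudilin-type $p$-adic arguments that discard primes for which the valuation of the relevant products of Pochhammer symbols is positive; the prime number theorem then yields $\log\Delta_n/n \to \beta$. On the analytic side, realize $S_n$ as a complex contour integral (barnes-type) and apply the saddle-point method to an explicit meromorphic function $\varphi(z)$ of degree $O(n)$; the decay rate $\log\alpha = -\lim n^{-1}\log|S_n|$ is determined by the unique saddle of a real analytic function $f(z)$ depending only on $(a_i,m_j)$.

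The real difficulty, and the place where the improvement from $69$ to $35$ has to come from, is the \emph{parameter optimization}: to exhibit concrete integer data $(A,r,b,a_{i},m_{j})$ for which the pole order is exactly $17$, the elimination of $\zeta(3)$ and of the even zetas goes through, and simultaneously the numerical comparison $\log\alpha > 2\log\beta$ holds. Since the Rivoal--Zudilin construction required pole order $35$ (hence $\zeta(69)$) to clear this threshold, a genuinely new ingredient is needed. The most promising routes are: (i) a sharper family of rational functions with additional contiguous Pochhammer factors designed to reduce the analytic norm without inflating the arithmetic denominator; (ii) a refined $p$-adic denominator reduction in the spirit of Hessami--Pilehrood--Tauraso and Krattenthaler--Rivoal, exploiting that certain prime powers cancel in the partial-fraction coefficients; or (iii) a direct elimination among two independent integer linear forms that cuts down the number of zeta values effectively participating. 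With the optimized parameters in hand, verifying $\alpha > \beta^{2}$ is a finite numerical check, and Nesterenko's criterion then closes the argument.
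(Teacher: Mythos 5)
There is a genuine gap, and it is exactly at the step you yourself flag as ``the real difficulty.''

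First, a smaller error: the claim that ``the central symmetry $R_n(-n-t)=\pm R_n(t)$ kills the coefficients of $\zeta(2k)$ \emph{and} of $\zeta(3)$'' is false. The symmetry $a_{i,k}=(-1)^{i+1}a_{i,m_2n-k}$ annihilates one parity class of coefficients $\rho_i=\sum_k a_{i,k}$; with the sign chosen to kill the even ones, $\rho_3$ survives. Eliminating $\zeta(3)$ while retaining $\zeta(5),\ldots,\zeta(35)$ requires a separate device: either differentiating $R_n$ twice (Rivoal, Zudilin) or shifting the summation variable by rational numbers (Zudilin, Sprang). Your construction does neither, so your single linear form would still contain a nonzero multiple of $\zeta(3)$, and Nesterenko's criterion would only recover facts already known from Ap\'ery and Zudilin.

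Second, and more structurally, the paper does not use Nesterenko's criterion at all, and this is not cosmetic. The proof argues by contradiction: assume $\zeta(3)$ and exactly one $\zeta(i_2)$ with $i_2\in\{5,7,9,11\}$ (known from Zudilin) are the only irrational ones among $\zeta(3),\ldots,\zeta(35)$. It then forms sums $S_{n,\theta}=\sum_{t\geqslant1}R_n(t+\theta)$ over the shifts $\theta\in\{1,\tfrac12,\tfrac13,\tfrac23\}$ (the ``inserting rational roots'' mechanism), combines them into $\widehat{S}_{n,b}=\sum_{k=1}^b S_{n,k/b}$, which introduces coefficients $\rho_i b^i$ in front of $\zeta(i)$, and chooses integer weights $w_1,w_2,w_3$ via the invertibility of the $3\times3$ Vandermonde-type matrix so that the coefficients of $\zeta(i_1)=\zeta(3)$ and of $\zeta(i_2)$ both vanish in $\widetilde{S}_n=\sum_{b=1}^3 w_b\widehat{S}_{n,b}$, while the coefficient of the leading asymptotic does not. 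What remains is an integer after clearing denominators, it is nonzero, and it tends to $0$ once $C_1>C_2$ --- contradiction. This only demands that the linear forms go to zero (an Ap\'ery-type threshold), whereas your Nesterenko route demands the far stronger inequality $\log\alpha>2\log\beta$. With the construction and parameters of this paper the latter inequality does not hold, so route (iii) in your list is the right instinct but must be realized \emph{not} as ``two independent linear forms plus Nesterenko'' but as a Vandermonde elimination that exploits the \emph{a priori} knowledge of which two $\zeta$-values are already irrational. That contradiction mechanism, together with the purely ``rational-roots'' construction over $\{1,\tfrac12,\tfrac13,\tfrac23\}$, is the missing ingredient your proposal names but does not supply.

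One further minor slip: you say the highest pole order is $17$ so that only $\zeta(5),\ldots,\zeta(35)$ appear; in fact a pole of order $i$ produces $\zeta(i)$, so reaching $\zeta(35)$ requires pole order $35$ (the paper works with $s+1=36$).
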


We briefly describe the approaches in \cite{RZ20} and in this paper. We construct some concrete rational functions to obtain linear forms in $1$ and the Riemann zeta values. To rule out the unwanted $\zeta(3)$, there are two different methods. The first method is ``taking twice derivatives'', developed in \cite{Ri02,Zu01}; the second one is ``inserting rational roots'', initially proposed in \cite{Zu18} and developed by Sprang \cite{Sp18}. In \cite{RZ20}, the authors combined these two methods; in this paper, we use purely the second method in an elaborated way.

In Sections 2-5, we deal with the theoretical and computational parts of Theorem \ref{maintheorem}. In the last section, we consider a related problem for Catalan's constant $\beta(2)$.

\bigskip

\noindent\textbf{Acknowledgements.} L.L. wishes to thank Professor Wadim Zudilin for teaching him about the $\Phi_n$-factors. We would like to thank the referees for carefully reading the manuscript and for giving constructive comments which helped to improve the quality of the paper. In particular, we are greatly indebted to one of the referees for providing better parameters for Theorem \ref{secondtheorem}.

\section{Rational Functions and Linear Forms}

Our approach is a combination of the constructions in \cite{Zu04} and \cite{LY20} (in turn, the latter is based on \cite{FSZ19}).

For an odd integer $s \geqslant 5$, consider a collection $(m_1,m_2;\delta_1,\delta_2,\ldots,\delta_{s+1})$ of integral parameters satisfying the conditions $m_1,m_2 \geqslant 1$,
\begin{equation}\label{conditionfordelta}
0 \leqslant \delta_j < \frac{m_2}{2}, \text{~for all~}j=1,2,\ldots,s+1, \text{~and~} \sum_{j=1}^{s+1} \delta_j < \frac{(s-2)m_2 - 8m_1}{2}.
\end{equation}
Denote $\delta_{\min} = \min_{1 \leqslant j \leqslant s+1 } \delta_j$ and
\[ \mathcal{Z} = \left\{ 1,\frac{1}{2},\frac{1}{3},\frac{2}{3} \right\}. \]
For any positive even integer $n$, we define the rational function
\begin{align}\label{R_n(t)}
R_n(t) =& ~ 2^{2(2m_1+m_2)n} 3^{3(2m_1+m_2)n} \frac{\prod_{j=1}^{s+1} \left( (m_2 - 2\delta_j)n \right)!}{n!^{8m_1+3m_2}} (2t+m_2n) \notag \\
& \times (t-m_1n) \frac{\prod_{\theta \in \mathcal{Z}} (t-m_{1}n+\theta)_{(2m_1+m_2)n}}{(t)_{m_{2}n+1}\prod_{j=1}^{s+1}(t+\delta_{j}n)_{(m_2-2\delta_j)n+1}},
\end{align}
where $(y)_{k} = y(y+1)\cdots(y+k-1)$ is the rising factorial of length $k$. Notice that the numerator and denominator of $R_n(t)$ have a common factor $(t)_{m_{2}n+1}$, also, the condition $\sum_{j=1}^{s+1} \delta_j < \frac{(s-2)m_2 - 8m_1}{2}$ implies that $\deg R_n \leqslant -n \leqslant -2$. Hence, $R_n(t)$ has the unique partial-fraction decomposition
\begin{equation}\label{partial_fraction_decomposition}
R_n(t) = \sum_{i=1}^{s+1} \sum_{k = \delta_{\min}n}^{(m_2-\delta_{\min})n} \frac{a_{i,k}}{(t+k)^i}.
\end{equation}

For any $\theta \in \mathcal{Z}$ we define
\[ S_{n,\theta} = \sum_{t = 1}^{\infty} R_n(t+\theta).  \]
Recall the definition of the Hurwitz zeta values:
\[ \zeta(i,\alpha) = \sum_{t=0}^{\infty} \frac{1}{(t+\alpha)^i}, \]
where $i \geqslant 2$ is an integer and $\alpha > 0$ is a real number.

It is direct to check that $R_n(t)$ possesses the symmetry $R_n(t) = -R_n(-t-m_{2}n)$, so
\[ a_{i,k} =  (-1)^{i+1} a_{i,m_{2}n-k}. \]
Then a standard argument (see \cite[Lemma 1]{FSZ19}) implies that we can express $S_{n,\theta}$ as a linear form in $1$ and the Hurwitz zeta values with rational coefficients.

\begin{lemma}
For all $\theta \in \mathcal{Z}$, we have
\[ S_{n,\theta} = \rho_{0,\theta} + \sum_{3 \leqslant i \leqslant s \atop i \text{~odd~}} \rho_i \zeta(i,\theta), \]
where the rational coefficient
\begin{equation}\label{rho_i}
\rho_i = \sum_{k = \delta_{\min}n}^{(m_2-\delta_{\min})n} a_{i,k} \quad(\text{for}~3 \leqslant i \leqslant s,~i \text{~odd})
\end{equation}
does not depend on $\theta \in \mathcal{Z}$, and
\begin{equation}\label{rho_0}
\rho_{0, \theta}=-\sum_{k = \delta_{\min}n}^{(m_2-\delta_{\min})n} \sum_{\ell=0}^{k} \sum_{i=1}^{s+1} \frac{a_{i, k}}{(\ell+\theta)^{i}}.
\end{equation}
\end{lemma}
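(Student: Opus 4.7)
The plan is to substitute the partial-fraction decomposition \eqref{partial_fraction_decomposition} into $S_{n,\theta} = \sum_{t=1}^{\infty} R_n(t+\theta)$ and reorganise the resulting double sum. The key identity (obtained by the change of variable $\ell = t+k$) is
\[
\sum_{t=1}^{\infty} \frac{1}{(t+\theta+k)^{i}} \;=\; \zeta(i,\theta) \;-\; \sum_{\ell=0}^{k} \frac{1}{(\ell+\theta)^{i}} \qquad (i\geqslant 2),
\]
which, after multiplying by $a_{i,k}$ and summing over $k$ and over $i\geqslant 2$, produces a term $\sum_{i\geqslant 2}\bigl(\sum_k a_{i,k}\bigr)\zeta(i,\theta)$ plus a finite ``tail'' that assembles (together with the $i=1$ tail handled below) into the expression \eqref{rho_0}.

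The task therefore reduces to showing that $\sum_k a_{i,k}=0$ unless $i$ is odd with $3\leqslant i\leqslant s$. The symmetry $a_{i,k}=(-1)^{i+1}a_{i,m_2 n-k}$, inherited from $R_n(t)=-R_n(-t-m_2 n)$, gives $\sum_k a_{i,k}=0$ for every even $i\in\{2,4,\ldots,s+1\}$; the degree bound $\deg R_n\leqslant -n\leqslant -2$ forces the $1/t$-coefficient of the Laurent expansion of $R_n(t)$ at $\infty$ to vanish, hence $\sum_k a_{1,k}=0$. For the surviving odd indices $3\leqslant i\leqslant s$, the resulting coefficient $\rho_i=\sum_k a_{i,k}$ is manifestly independent of $\theta$, matching \eqref{rho_i}.

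The one genuine subtlety is the $i=1$ piece, where each individual series $\sum_{t\geqslant 1}(t+\theta+k)^{-1}$ diverges. I would pass to a truncation $\sum_{t=1}^{N}$, write the inner sum as $H_{N+k}(\theta)-H_{k}(\theta)$ with $H_{m}(\theta)=\sum_{j=1}^{m}(j+\theta)^{-1}$, and use $H_{N+k}(\theta)=\log N+\gamma(\theta)+o(1)$ uniformly on the finite range of $k$; the vanishing $\sum_k a_{1,k}=0$ then kills both the logarithm and the constant $\gamma(\theta)$, so the limit as $N\to\infty$ exists and equals $-\sum_k a_{1,k}H_k(\theta)$, which agrees with the $i=1$ summand of \eqref{rho_0} (adjoining the $\ell=0$ term changes the sum by $-\theta^{-1}\sum_k a_{1,k}=0$). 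The main obstacle is precisely this $i=1$ convergence check; everything else is a mechanical rearrangement in the spirit of \cite[Lemma 1]{FSZ19}.
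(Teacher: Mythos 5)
Your proposal is correct and is essentially the standard argument that the paper delegates to \cite[Lemma 1]{FSZ19}; you have simply made explicit what that citation covers. All the pieces are right: the shift identity $\sum_{t\geqslant 1}(t+\theta+k)^{-i}=\zeta(i,\theta)-\sum_{\ell=0}^{k}(\ell+\theta)^{-i}$ for $i\geqslant 2$; the vanishing of $\sum_k a_{i,k}$ for even $i$ via the symmetry $a_{i,k}=(-1)^{i+1}a_{i,m_2n-k}$ (note the $k$-range $[\delta_{\min}n,(m_2-\delta_{\min})n]$ is indeed stable under $k\mapsto m_2n-k$); the vanishing of $\sum_k a_{1,k}$ from $\deg R_n\leqslant -2$; and the careful truncation at $i=1$, where $\sum_k a_{1,k}=0$ kills both $\log N$ and the constant, and adjoining $\ell=0$ is free for the same reason. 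This is exactly how the linear form in $1$ and $\zeta(i,\theta)$ for odd $i$ with $3\leqslant i\leqslant s$ (recall $s+1$ is even, so it also drops out) is obtained.

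One small clean-up worth making: your phrase ``change of variable $\ell=t+k$'' is slightly off --- you shift the summation index to $m=t+k$ so that $\sum_{t\geqslant 1}(t+\theta+k)^{-i}=\sum_{m\geqslant k+1}(m+\theta)^{-i}$, and then $\ell$ only appears in the finite correction $\sum_{\ell=0}^{k}$; the two indices play different roles and should not be conflated.
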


\bigskip

\section{Arithmetic of Coefficients}
We proceed to investigate the arithmetic properties of the coefficients $a_{i,k}$. Let $D_N$ be the least common multiple of $1,2,\ldots,N$.

\begin{lemma}\label{arithmetic_lemma_for_a_i_k}
Suppose that $n$ is even and $n > s^2$. We have
\[ \Phi_{n}^{-1} D_{(m_2-2\delta_{\min})n}^{s+1-i} a_{i,k} \in \mathbb{Z} \]
for $1 \leqslant i \leqslant s+1$ and $\delta_{\min}n \leqslant k \leqslant (m_2-\delta_{\min})n$, where the product over primes
\[ \Phi_n  = \prod_{\sqrt{3(2m_1+m_2)n} < p \leqslant (m_2-2\delta_{\min})n} p^{\nu_0(n/p)} \]
is defined through the $1$-periodic function
\[ \nu_0(x) = \min_{ y \in \mathbb{R} } \nu(x,y)  \]
and
\begin{align*}
\nu(x,y) = &~\left( \sum_{j=1}^{s+1} \left( \lfloor (m_2 - 2\delta_j)x \rfloor - \lfloor y-\delta_{j}x \rfloor - \lfloor (m_2-\delta_j)x - y \rfloor \right) \right) \\
&+  \lfloor 2m_{1}x + 2y \rfloor - \lfloor m_{1}x + y \rfloor + \lfloor 2(m_1+m_2)x - 2y \rfloor - \lfloor (m_1+m_2)x - y \rfloor    \\
&+  \lfloor 3m_{1}x+3y \rfloor + \lfloor 3(m_1+m_2)x - 3y \rfloor - \lfloor y \rfloor - \lfloor m_2x-y \rfloor - (8m_1+3m_2)\lfloor x \rfloor.
\end{align*}
\end{lemma}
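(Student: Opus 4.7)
The plan is to combine the classical denominator estimate for partial-fraction coefficients of rational functions built from integer-valued binomials (which supplies the $D_{(m_2-2\delta_{\min})n}^{s+1-i}$ factor) with a refined $p$-adic valuation count at primes in the window $\bigl(\sqrt{3(2m_1+m_2)n},\,(m_2-2\delta_{\min})n\bigr]$ (which supplies the $\Phi_n^{-1}$ factor).

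\emph{Derivative formula and binomial form.} Since the numerator Pochhammer $(t-m_1n+1)_{(2m_1+m_2)n}$ contains all the zeros of $(t)_{m_2n+1}$, the latter cancels completely and the residual pole of $R_n(t)$ at $t=-k$ for $k\in[\delta_{\min}n,\,(m_2-\delta_{\min})n]$ has order at most $s+1$. Consequently
\[
a_{i,k} \;=\; \frac{1}{(s+1-i)!}\left(\frac{d}{dt}\right)^{s+1-i}\!\bigl[(t+k)^{s+1}R_n(t)\bigr]_{t=-k}.
\]
Using the Pochhammer identities
\[
2^{2N}(t-m_1n+1)_{N}(t-m_1n+\tfrac12)_{N}=(2t-2m_1n+1)_{2N},
\]
\[
3^{3N}(t-m_1n+1)_{N}(t-m_1n+\tfrac13)_{N}(t-m_1n+\tfrac23)_{N}=(3t-3m_1n+1)_{3N}
\]
with $N=(2m_1+m_2)n$, the prefactor $2^{2N}3^{3N}$ converts the four $\mathcal{Z}$-Pochhammers into an integer-valued polynomial in $t$; together with the factorials $((m_2-2\delta_j)n)!$ (which turn each denominator Pochhammer into an inverse binomial coefficient) and the normalisation $n!^{8m_1+3m_2}$, this exhibits $R_n(t)$ as a signed product $G(t)=\prod_\ell\binom{A_\ell t+B_\ell}{C_\ell}^{\pm 1}$ of integer-valued binomials with $A_\ell\in\{\pm 1,\pm 2,\pm 3\}$ and $\max_\ell C_\ell=(m_2-2\delta_{\min})n$.

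\emph{Standard estimate and $\Phi_n$-refinement.} A standard lemma on derivatives of products of integer-valued binomials (see \cite[\S 2]{Zu04},\ \cite{FSZ19}) gives
\[
v_p\!\left(\frac{D_{M}^{r}}{r!}\,G^{(r)}(t_0)\right) \;\geq\; v_p\bigl(G(t_0)\bigr)
\]
for any such product $G$ regular at $t_0\in\mathbb{Z}$, with $M$ a bound on the absolute values of the binomial arguments at $t_0$. Applied to $(t+k)^{s+1}R_n(t)$ with $M=(m_2-2\delta_{\min})n$ and $r=s+1-i$ this already yields the basic bound $D_{(m_2-2\delta_{\min})n}^{s+1-i}a_{i,k}\in\mathbb{Z}$. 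For the $\Phi_n^{-1}$-refinement it then suffices to prove $v_p\bigl((t+k)^{s+1}R_n(t)|_{t=-k}\bigr)\geq\nu_0(n/p)$ at every prime $p$ in the $\Phi_n$-window. The condition $p^2>3(2m_1+m_2)n$ is precisely what makes only the $j=1$ term of Legendre's formula $v_p(m!)=\sum_{j\geq1}\lfloor m/p^j\rfloor$ active for every factorial appearing. Setting $x=n/p$ and $y=k/p$, a direct term-by-term $v_p$-count identifies each of the nine summands of $\nu(x,y)$ with a specific factor of $R_n(t)$: the sum over $j$ from the $s+1$ denominator Pochhammers paired with $((m_2-2\delta_j)n)!$, the $\lfloor 2m_1x+2y\rfloor-\lfloor m_1x+y\rfloor$ and $\lfloor 2(m_1+m_2)x-2y\rfloor-\lfloor (m_1+m_2)x-y\rfloor$ terms from the $\theta=1/2$-Pochhammer together with the explicit factors $(2t+m_2n)$ and $(t-m_1n)$, the $\lfloor 3m_1x+3y\rfloor$ and $\lfloor 3(m_1+m_2)x-3y\rfloor$ terms from the $\theta=1/3,\,2/3$-Pochhammers, the $-\lfloor y\rfloor-\lfloor m_2x-y\rfloor$ terms from the cancelled $(t)_{m_2n+1}$, and the $-(8m_1+3m_2)\lfloor x\rfloor$ term from $n!^{8m_1+3m_2}$. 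Taking the infimum over $y\in\mathbb{R}$ then delivers the desired bound.

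\emph{Main obstacle.} The most delicate step is the $p$-adic bookkeeping above: each of the nine summands of $\nu(x,y)$ must be matched to the correct factor of $R_n(t)$ with the correct sign, and the $2$- and $3$-Pochhammer identities must be applied so that the resulting objects are genuinely integer-valued binomials (of the right ``size'' $(m_2-2\delta_{\min})n$). By contrast, the transfer from the value at $t=-k$ to the $(s+1-i)$-th derivative is a mechanical consequence of the standard denominator estimate quoted above, and the hypothesis $n>s^2$ is used only to guarantee that the $\Phi_n$-window is non-empty.
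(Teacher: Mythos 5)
Your overall plan — get the $D_{(m_2-2\delta_{\min})n}^{s+1-i}$ factor from a standard integer-valued-derivative lemma and the $\Phi_n^{-1}$ refinement from a Legendre-type $p$-adic valuation count, with $p^2>3(2m_1+m_2)n$ ensuring only the $j=1$ term of Legendre's formula is active — matches the paper's strategy. But the key factorisation step is genuinely different from the paper's and, as written, has a gap.

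You propose to convert $2^{2N}3^{3N}\prod_{\theta\in\mathcal{Z}}(t-m_1n+\theta)_N$ (with $N=(2m_1+m_2)n$) into integer-valued binomials via the doubling identity $2^{2N}(t-m_1n+1)_N(t-m_1n+\tfrac12)_N=(2t-2m_1n+1)_{2N}$ and the tripling identity $3^{3N}(t-m_1n+1)_N(t-m_1n+\tfrac13)_N(t-m_1n+\tfrac23)_N=(3t-3m_1n+1)_{3N}$. These two identities each consume the same $\theta=1$ Pochhammer $(t-m_1n+1)_N$, which occurs only once in $R_n(t)$ — and which, on top of that, you have already spent on cancelling the denominator $(t)_{m_2n+1}$ (your first paragraph). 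So one of the three roles (pole cancellation, doubling, tripling) must go unfilled. Concretely, $2^{2N}3^{3N}\prod_\theta(t-m_1n+\theta)_N=(2t-2m_1n+1)_{2N}(3t-3m_1n+1)_{3N}/(t-m_1n+1)_N$; while this happens to be a polynomial (the zeros of $(t-m_1n+1)_N$ are common to both big Pochhammers), it is not presented as a product of binomials regular at $t=-k$, and your claim that $\max_\ell C_\ell=(m_2-2\delta_{\min})n$ fails for this decomposition: the doubling/tripling produce Pochhammers of length $2N$ and $3N$, which as single binomial factors would only give the much weaker bound $D_{3(2m_1+m_2)n}^{s+1-i}a_{i,k}\in\mathbb{Z}$.

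The paper avoids all of this by \emph{not} combining the $\mathcal{Z}$-Pochhammers at all. Each of $G_{1/2},G_{1/3},G_{2/3}$ is kept as its own building block, normalised by $n!^{2m_1+m_2}$, and is shown (via \cite[Prop.~3.2]{LY20}, or equivalently by chopping into $2m_1+m_2$ size-$n$ pieces as in Remark~\ref{rmk}) to satisfy $D_n^\ell\cdot\frac{1}{\ell!}F^{(\ell)}(-k)\in\mathbb{Z}$. The $\theta=1$ Pochhammer, after absorbing $(t)_{m_2n+1}$, splits as $G_1^-(t)G_1^+(t)=\frac{(t-m_1n)_{m_1n}(t+m_2n+1)_{m_1n}}{n!^{2m_1}}$, again handled by the same size-$n$ lemma. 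The $H_j$'s use \cite[Lemma~16]{Zu04} with $D_{(m_2-2\delta_j)n}$. Leibniz then gives the uniform $D_{(m_2-2\delta_{\min})n}^{s+1-i}$ bound, and the $\Phi_n^{-1}$ refinement comes from \cite[Lemmas~17,~18]{Zu04} applied to $\operatorname{ord}_p(H_j(t)(t+k))^{(\ell)}|_{t=-k}$ and $\operatorname{ord}_p G^{(\ell)}(-k)$, where the explicit evaluation
\[
G(-k)=(-1)^k\frac{(2m_1n+2k)!\,(2(m_1+m_2)n-2k)!}{(m_1n+k)!\,((m_1+m_2)n-k)!}\cdot\frac{(3m_1n+3k)!\,(3(m_1+m_2)n-3k)!}{k!\,(m_2n-k)!}\cdot\frac{1}{n!^{8m_1+3m_2}}
\]
produces exactly the floor-function terms of $\nu(x,y)$. (In particular the $\lfloor 2m_1x+2y\rfloor-\lfloor m_1x+y\rfloor+\lfloor 2(m_1+m_2)x-2y\rfloor-\lfloor(m_1+m_2)x-y\rfloor$ block comes entirely from $G_{1/2}$, not from $(2t+m_2n)$ and $(t-m_1n)$ as you state; the $-\lfloor y\rfloor-\lfloor m_2x-y\rfloor$ block comes from $G_1^{\pm}$.) To repair your version you would need to drop the doubling/tripling, keep the $\theta\neq1$ Pochhammers separate and chop each of them (and $G_1^{\pm}$) into size-$n$ binomials, which is precisely what the paper does.
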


\begin{proof}
We first split the function $R_n(t)$ into a product of some standard building blocks. Let
\[ H_j(t) =  \frac{((m_2-2\delta_j)n)!}{(t+\delta_jn)_{(m_2-2\delta_j)n+1}}, \quad j=1,2,\ldots,s+1,\]
and
\begin{align*}
G_{1/2}(t) &= 2^{(2m_1+m_2)n} \cdot 2^{(2m_1+m_2)n}\frac{(t-m_1n+1/2)_{(2m_1+m_2)n}}{n!^{2m_1+m_2}}, \\
G_{1/3}(t) &= 3^{(2m_1+m_2)n/2} \cdot 3^{(2m_1+m_2)n}\frac{(t-m_1n+1/3)_{(2m_1+m_2)n}}{n!^{2m_1+m_2}}, \\
G_{2/3}(t) &= 3^{(2m_1+m_2)n/2} \cdot 3^{(2m_1+m_2)n}\frac{(t-m_1n+2/3)_{(2m_1+m_2)n}}{n!^{2m_1+m_2}}, \\
G_{1}^{-}(t) &= \frac{(t-m_1n)_{m_1n}}{n!^{m_1}}, \\
G_1^{+}(t) &= \frac{(t+m_2n+1)_{m_1n}}{n!^{m_1}}.
\end{align*}
(Recall that $n$ is even, so $G_{1/3}(t)$ and $G_{2/3}(t)$ are polynomials in $t$ with rational coefficients.) Then we can rewrite $R_n(t)$ as (see \eqref{R_n(t)})
\begin{equation}\label{R=GGGHHH}
R_n(t) = (2t+m_2n)G_{1/2}(t)G_{1/3}(t)G_{2/3}(t)G_{1}^{-}(t)G_1^{+}(t)  \prod_{j=1}^{s+1} H_j(t).
\end{equation}
It is well known that (see \cite[Lemma 16]{Zu04})
\[ D_{(m_2-2\delta_{\min})n}^{\ell} \cdot \frac{1}{\ell!} \left( H_j(t)(t+k) \right)^{(\ell)} \big|_{t=-k} \in \mathbb{Z} \]
for any non-negative integer $\ell$, any integer $k$ such that $\delta_{\min}n \leqslant k \leqslant (m_2-\delta_{\min})n$, and any index $j$ with $1 \leqslant j \leqslant s+1$. It is also elementary to show that (see \cite[Propsition 3.2]{LY20}) for $F(t) = G_{1/2}(t), G_{1/3}(t), G_{2/3}(t), G_{1}^{-}(t)$ and $G_1^{+}(t)$,
\[ D_{n}^{\ell} \cdot \frac{1}{\ell!} F^{(\ell)}(-k) \in \mathbb{Z} \]
for any non-negative integer $\ell$ and any integer $k$. By applying the Leibniz rule, we derive that
\begin{equation}\label{weak_arithmetic_lemma}
D_{(m_2-2\delta_{\min})n}^{s+1-i} a_{i,k} = D_{(m_2-2\delta_{\min})n}^{s+1-i} \cdot \frac{1}{(s+1-i)!} \left( R_n(t)(t+k)^{s+1} \right)^{(s+1-i)} \big|_{t=-k} \in \mathbb{Z}
\end{equation}
for all $1 \leqslant i \leqslant s+1$ and $\delta_{\min}n \leqslant k \leqslant (m_2-\delta_{\min})n$.

Now, by \cite[Lemma 18]{Zu04}, for any prime $p > \sqrt{3(2m_1+m_2)n}$, any integer $k$ with $\delta_{\min}n \leqslant k \leqslant (m_2-\delta_{\min})n$, any non-negative integer $\ell$, and any index $j=1,2,\ldots,s+1$, there hold the following estimates for the $p$-adic orders:
\begin{equation}\label{ord_p_H}
\operatorname{ord}_{p}\left( \left( H_j(t)(t+k)  \right)^{(\ell)} \big|_{t=-k} \right) \geqslant -\ell + \left\lfloor \frac{(m_2 - 2\delta_j)n}{p} \right\rfloor - \left\lfloor \frac{k-\delta_{j}n}{p} \right\rfloor - \left\lfloor \frac{(m_2-\delta_j)n - k}{p} \right\rfloor.
\end{equation}
Define the polynomial $G(t) = G_{1/2}(t)G_{1/3}(t)G_{2/3}(t)G_{1}^{-}(t)G_1^{+}(t)$. For any integer $k$ with $\delta_{\min}n \leqslant k \leqslant (m_2-\delta_{\min})n$, we have
\[ G(-k) = (-1)^k \frac{(2m_1n+2k)!(2(m_1+m_2)n-2k)!}{(m_1n+k)!((m_1+m_2)n-k)!} \cdot \frac{(3m_1n+3k)!(3(m_1+m_2)n-3k)!}{k!(m_2n-k)!} \cdot \frac{1}{n!^{8m_1+3m_2}}. \]
So for any prime $p > \sqrt{3(2m_1+m_2)n}$ and any integer $k$ with $\delta_{\min}n \leqslant k \leqslant (m_2-\delta_{\min})n$, for $\ell = 0$ we have
\begin{align}\label{ord_p_G}
\operatorname{ord}_{p}\left( G^{(\ell)}(-k) \right) \geqslant ~& - \ell +  \left\lfloor \frac{2m_{1}n + 2k}{p} \right\rfloor - \left\lfloor \frac{m_{1}n + k}{p} \right\rfloor + \left\lfloor \frac{2(m_1+m_2)n - 2k}{p} \right\rfloor   \notag  \\
&- \left\lfloor \frac{(m_1+m_2)n - k}{p} \right\rfloor +  \left\lfloor \frac{3m_{1}n+3k}{p} \right\rfloor + \left\lfloor \frac{3(m_1+m_2)n - 3k}{p} \right\rfloor \notag \\
&- \left\lfloor \frac{k}{p} \right\rfloor - \left\lfloor \frac{m_2n-k}{p} \right\rfloor - (8m_1+3m_2)\left\lfloor \frac{n}{p} \right\rfloor .
\end{align}
Then applying inductive arguments to $G^{(\ell)}(t) = \left(G(t) \cdot \frac{G'(t)}{G(t)} \right)^{(\ell-1)}$, similar to that in the proof of \cite[Lemma 17]{Zu04}, we deduce that the inequality \eqref{ord_p_G} holds for any non-negative integer $\ell$.

Finally, we write \eqref{R=GGGHHH} as $R_n(t) = (2t+m_2n) G(t) \prod_{j=1}^{s+1} H_j(t)$. Applying the Leibniz rule again with the help of \eqref{ord_p_H} and \eqref{ord_p_G}, we obtain that
\begin{align}\label{saving_large_primes}
\operatorname{ord}_{p} (a_{i,k}) &= \operatorname{ord}_{p} \left(  \frac{1}{(s+1-i)!} \left( R_n(t)(t+k)^{s+1} \right)^{(s+1-i)} \big|_{t=-k} \right) \notag \\
&\geqslant -(s+1-i) + \nu\left( \frac{n}{p},\frac{k}{p} \right) \notag \\
&\geqslant -(s+1-i) + \nu_0(n/p)
\end{align}
for any prime $p > \max\{ \sqrt{3(2m_1+m_2)n},s \}$, any $i=1,2,\ldots,s+1$, and any integer $k$ such that $\delta_{\min}n \leqslant k \leqslant (m_2-\delta_{\min})n$. Combining \eqref{weak_arithmetic_lemma} and \eqref{saving_large_primes}, we complete the proof of Lemma \ref{arithmetic_lemma_for_a_i_k}.
\end{proof}

\bigskip

We study the coefficients $\rho_i$ and $\rho_{0,\theta}$ in the following lemma.
\begin{lemma}\label{arithmeticlemma}
Let $n$ be an even integer and $n > s^2$. We have~$\Phi_{n}^{-1} D_{(m_2-2\delta_{\min})n}^{s+1-i}  \rho_i \in \mathbb{Z}$ for any odd $i$, $3 \leqslant i \leqslant s$. Moreover, we have
\[ \Phi_{n}^{-1} D_{(m_2-2\delta_{\min})n}^{s+1} \rho_{0,\theta} \in \mathbb{Z} \text {~for any~} \theta \in \mathcal{Z}\setminus\{1\},\]
and
\[\left( \Phi_{n}^{-1}  \prod_{j=1}^{s+1} D_{\max\{ (m_2-2\delta_{\min})n, (m_2 - \delta_j)n \}+1} \right) \rho_{0,1} \in \mathbb{Z}.  \]
\end{lemma}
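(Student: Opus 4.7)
The plan is to reduce each claim to Lemma~\ref{arithmetic_lemma_for_a_i_k} combined with the pole-order structure of the $a_{i,k}$ (and, for the non-integer shifts, a prime-by-prime analysis). The first claim is immediate: since $\rho_i=\sum_k a_{i,k}$ by \eqref{rho_i}, summing Lemma~\ref{arithmetic_lemma_for_a_i_k} over $k$ yields $\Phi_n^{-1}D_{(m_2-2\delta_{\min})n}^{s+1-i}\rho_i\in\mathbb{Z}$.

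For $\rho_{0,1}$, I would interchange the order of summation in \eqref{rho_0} to write $\rho_{0,1}=-\sum_{i,k,\ell}a_{i,k}/(\ell+1)^i$. The decisive fact is that $a_{i,k}\neq 0$ forces $t=-k$ to be a pole of $R_n$ of order at least $i$; after the cancellations of $(2t+m_2n)$ and of the $\theta=1$ factor $(t-m_1n+1)_{(2m_1+m_2)n}$ against the denominator, this pole order equals $\#\{j:k\in[\delta_j n,(m_2-\delta_j)n]\}-[k=m_2n/2]$. Hence one may select $i$ indices $j_1,\dots,j_i$ with $k\in[\delta_{j_m}n,(m_2-\delta_{j_m})n]$ for each $m$, so that $\ell+1\le k+1\le(m_2-\delta_{j_m})n+1\le\max\{(m_2-2\delta_{\min})n,(m_2-\delta_{j_m})n\}+1$ and $D_{k+1}^i\mid\prod_{m=1}^i D_{\max\{(m_2-2\delta_{\min})n,(m_2-\delta_{j_m})n\}+1}$. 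The remaining $s+1-i$ indices each contribute a factor divisible by $D_{(m_2-2\delta_{\min})n}$, together absorbing the $D_{(m_2-2\delta_{\min})n}^{s+1-i}$ needed to clear $a_{i,k}$ via Lemma~\ref{arithmetic_lemma_for_a_i_k}. Summing over $(i,k,\ell)$ then gives the claim.

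For $\rho_{0,\theta}$ with $\theta\in\mathcal{Z}\setminus\{1\}$, the same interchange applies, but writing $\theta=c_\theta/d_\theta$ with $d_\theta\in\{2,3\}$ turns the inner sum into $d_\theta^i\sum_{\ell=0}^k(d_\theta\ell+c_\theta)^{-i}$, whose naive denominator bound $D_{d_\theta(m_2-\delta_{\min})n+c_\theta}^i$ exceeds $D_{(m_2-2\delta_{\min})n}^i$. A prime-by-prime analysis is needed. For primes $p$ in the $\Phi_n$-window $(\sqrt{3(2m_1+m_2)n},(m_2-2\delta_{\min})n]$, the inequality $p^2>3(2m_1+m_2)n\ge d_\theta(m_2-\delta_{\min})n+c_\theta$ forces $v_p(d_\theta\ell+c_\theta)\le 1$, so the factor $D_{(m_2-2\delta_{\min})n}^i$ inside $D^{s+1}$ clears the inner sum at $p$ while $\Phi_n^{-1}D_{(m_2-2\delta_{\min})n}^{s+1-i}$ handles $a_{i,k}$ via Lemma~\ref{arithmetic_lemma_for_a_i_k}. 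For primes $p>(m_2-2\delta_{\min})n$, Lemma~\ref{arithmetic_lemma_for_a_i_k} already forces $v_p(a_{i,k})\ge 0$; the problematic $\ell$ are those with $p\mid d_\theta\ell+c_\theta$, and one uses the symmetry $a_{i,k}=(-1)^{i+1}a_{i,m_2n-k}$ together with the pole-order support of $a_{i,k}$ to show that the partial sums $A_{i,\ell}=\sum_{k\ge\ell}a_{i,k}$ supply enough $p$-divisibility to cancel the $(d_\theta\ell+c_\theta)^{-i}$ denominator.

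The main obstacle is this last step: establishing the cancellation at primes $p$ outside the $\Phi_n$-window. The design of $R_n$---in particular the rational shifts $\theta\in\{1/2,1/3,2/3\}$ via the numerator factors $(t-m_1n+\theta)_{(2m_1+m_2)n}$ together with the factor $2t+m_2n$---is what produces these cancellations and permits the uniform bound $D_{(m_2-2\delta_{\min})n}^{s+1}$ for $\theta\in\mathcal{Z}\setminus\{1\}$, in contrast to the larger product-type bound required for $\theta=1$ where no such rational shift is available.
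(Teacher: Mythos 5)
Your argument for $\rho_i$ and for $\rho_{0,1}$ is sound and is essentially the route the paper takes. For $\rho_{0,1}$ you avoid the paper's WLOG sorting of the $\delta_j$ by instead choosing, for each $(i,k)$ with $a_{i,k}\ne 0$, a set of $i$ indices $j$ with $k\le(m_2-\delta_j)n$; the sorting argument is just a clean way of producing the same selection, so the two implementations are equivalent. Your pole-order computation at $t=-k$ is correct, and the resulting $D$-factor bookkeeping closes that case.

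The second assertion ($\theta\in\mathcal{Z}\setminus\{1\}$) has a genuine gap. Your term-by-term bound works for primes $p$ in the $\Phi_n$-window, since there $\operatorname{ord}_p(D_M)\ge 1$ while $\operatorname{ord}_p(\ell+\theta)\le 1$, where $M=(m_2-2\delta_{\min})n$. But for primes $M<p\le d_\theta(m_2-\delta_{\min})n+c_\theta$ one has $\operatorname{ord}_p(D_M)=0$, so $D_M^{s+1}$ contributes nothing at $p$, yet $\operatorname{ord}_p(\ell+\theta)$ can equal $1$ and a single term $a_{i,k}/(\ell+\theta)^i$ can fail to be $p$-integral. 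Your proposal to control the partial sums $A_{i,\ell}=\sum_{k\ge\ell}a_{i,k}$ via the symmetry $a_{i,k}=(-1)^{i+1}a_{i,m_2n-k}$ is not substantiated: nothing you wrote shows $\operatorname{ord}_p(A_{i,\ell})\ge i$ for all problematic $p,\ell$, and this is exactly where the argument has to do real work. You correctly locate the source of extra cancellation in the rational shifts, but you do not convert that intuition into a proof.

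The paper's mechanism is different and bypasses the prime-by-prime analysis entirely. Write $\rho_{0,\theta}$ as a sum over $(k,\ell)$ of $T_{k,\ell}=\sum_i a_{i,k}/(\ell+\theta)^i$ and suppose some $\Phi_n^{-1}D_M^{s+1}T_{k_0,\ell_0}\notin\mathbb{Z}$. Because $\theta\in\{1/2,1/3,2/3\}$ and the complementary shift $1-\theta$ also lies in $\mathcal{Z}$, the numerator factor $(t-m_1n+1-\theta)_{(2m_1+m_2)n}$ of $R_n$ vanishes at $t=\ell_0-k_0+\theta$, so $R_n(\ell_0-k_0+\theta)=0$. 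Evaluating \eqref{partial_fraction_decomposition} there gives the identity $T_{k_0,\ell_0}=-\sum_{k\ne k_0}\sum_i a_{i,k}/(\ell_0-k_0+k+\theta)^i$, so for the offending prime $p$ there must also be some $(i_1,k_1)$ with $k_1\ne k_0$ and $\operatorname{ord}_p\bigl(\Phi_n^{-1}D_M^{s+1}a_{i_1,k_1}/(\ell_0-k_0+k_1+\theta)^{i_1}\bigr)<0$. Combined with Lemma~\ref{arithmetic_lemma_for_a_i_k} this forces $\operatorname{ord}_p(\ell_0+\theta)>\operatorname{ord}_p(D_M)$ and $\operatorname{ord}_p(\ell_0-k_0+k_1+\theta)>\operatorname{ord}_p(D_M)$, hence $\operatorname{ord}_p(k_0-k_1)>\operatorname{ord}_p(D_M)$, contradicting $0<|k_0-k_1|\le M$. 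This identity-based contradiction handles all primes at once and is the ingredient missing from your proposal.
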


\begin{proof}
Recall the definition \eqref{rho_i} of $\rho_i$, the first assertion follows immediately from Lemma \ref{arithmetic_lemma_for_a_i_k}.

In the following, we assume without loss of generality that $\delta_1 \leqslant \delta_2 \leqslant \cdots \leqslant \delta_{s+1}$. Then
\[ a_{i,k} = 0 \text{~if~} k > (m_2- \delta_{i})n. \]
(Because the order of pole of $R_n(t)$ at $t=-k$ is at most $i-1$ when $k > (m_2- \delta_{i})n$). Denote $M_j = \max\{ (m_2-2\delta_{\min})n, (m_2 - \delta_j)n \}+1$ for $j=1,2,\ldots,s+1$, and $M = (m_2-2\delta_{\min})n$. So $(m_2 - \delta_{\min})n+1 \geqslant M_1 \geqslant M_2 \geqslant \cdots \geqslant M_{s+1} \geqslant M+1$.

We now prove the last assertion. Since $a_{i,k} = 0$ when $k \geqslant M_i$, we can write $\rho_{0,1}$ defined in \eqref{rho_0} as
\begin{equation}\label{rho_0_preciser_form}
\rho_{0, 1}=- \sum_{i=1}^{s+1} \sum_{k = \delta_{\min}n}^{M_{i}-1} \left( a_{i,k} \sum_{\ell=0}^{k}  \frac{1}{(\ell+1)^{i}} \right).
\end{equation}
By Lemma \ref{arithmetic_lemma_for_a_i_k} and $M_j \geqslant M$, we know that
\[ \left( \Phi_{n}^{-1}  \prod_{j=i+1}^{s+1} D_{M_j} \right) a_{i,k} \in \mathbb{Z} \]
for all $1 \leqslant i \leqslant s+1$ and $\delta_{\min}n \leqslant k \leqslant M_{i}-1$. Clearly,
\[ \prod_{j=1}^{i} D_{M_j} \cdot  \sum_{\ell=0}^{k}  \frac{1}{(\ell+1)^{i}} \in \mathbb{Z} \]
for all $1 \leqslant i \leqslant s+1$ and $\delta_{\min}n \leqslant k \leqslant M_{i}-1$. Therefore, from \eqref{rho_0_preciser_form} we see that the last assertion holds.

To prove the second assertion, we argue by contradiction. Suppose that $ \Phi_{n}^{-1} D_{M}^{s+1} \rho_{0,\theta} \notin \mathbb{Z}$ for some $\theta \in \mathcal{Z} \setminus\{1\}$, then by \eqref{rho_0}, there exist $k_0$, $\ell_0$ such that $\delta_{\min}n \leqslant k_0 \leqslant (m_2 - \delta_{\min})n$, $0 \leqslant \ell_0 \leqslant k_0$, and
\[ \Phi_{n}^{-1} D_{M}^{s+1} \cdot \sum_{i=1}^{s+1} \frac{a_{i,k_0}}{(\ell_0+\theta)^{i}} \notin \mathbb{Z}. \]
Note that $R_n(\ell_0-k_0+\theta) = 0$, so by \eqref{partial_fraction_decomposition}, we have
\[ \Phi_{n}^{-1} D_{M}^{s+1} \cdot \sum_{i=1}^{s+1} \frac{a_{i,k_0}}{(\ell_0+\theta)^{i}} = - \Phi_{n}^{-1} D_{M}^{s+1} \cdot \sum_{k=\delta_{\min}n \atop k \neq k_0}^{(m_2 - \delta_{\min})n} \sum_{i=1}^{s+1} \frac{a_{i,k}}{(\ell_0-k_0+k+\theta)^{i}} \notin \mathbb{Z}. \]
Thus, there exist a prime $p$, some $i_0,i_1 \in \{ 1,\ldots,s+1 \} $, and some $k_1 \in \{ \delta_{\min}n,\ldots,  (m_2 - \delta_{\min})n \}$ with $k_1 \neq k_0$ such that
\[ \operatorname{ord}_p\left( \Phi_{n}^{-1} D_{M}^{s+1} \cdot \frac{a_{i_0,k_0}}{(\ell_0+\theta)^{i_0}}  \right) < 0, \quad \operatorname{ord}_p\left( \Phi_{n}^{-1} D_{M}^{s+1} \cdot \frac{a_{i_1,k_1}}{(\ell_0-k_0+k_1+\theta)^{i_1}}  \right) < 0. \]
Since $\Phi_{n}^{-1} D_{M}^{s+1-i} a_{i,k} \in \mathbb{Z}$ for all $i,k$ by Lemma \ref{arithmetic_lemma_for_a_i_k}, we deduce that
\[ \operatorname{ord}_p\left( \ell_0+\theta  \right) > \operatorname{ord}_p\left( D_M \right), \quad \operatorname{ord}_p\left( \ell_0-k_0+k_1+\theta  \right) > \operatorname{ord}_p\left( D_M \right). \]
Hence,
\[  \operatorname{ord}_p\left( |k_0-k_1| \right) > \operatorname{ord}_p\left( D_M \right), \]
but it contradicts the fact that $0 < |k_0-k_1| \leqslant (m_2 - 2\delta_{\min})n = M$. This completes the proof of Lemma \ref{arithmeticlemma}.
\end{proof}

\bigskip

\section{Asymptotics and Proof of Theorem \ref{maintheorem} }\label{Section_4}

The asymptotics of $\Phi_n$ can be established easily by the prime number theorem ($\sum_{p \leqslant x} \log p \sim x$ as $x \rightarrow +\infty$) in the following lemma. Such lemmas are sometimes called Chudnovsky-Rukhadze-Hata arguments. For details, we refer the reader to \cite[p. 341]{Ha93} and \cite[Lemma 4.4]{Zu02}.
\begin{lemma}\label{Phi}
We have
\[
\lim_{n \rightarrow +\infty} \frac{\log \Phi_n}{n} = \int_{0}^{1} \nu_0(x) {\rm d} \psi(x) + \int_{0}^{\frac{1}{m_2-2\delta_{\min}}} \nu_0(x) {\rm d} \left( \frac{1}{x} \right),
\]
where $\psi(x) = \frac{\Gamma'(x)}{\Gamma(x)}$ is the digamma function.
\end{lemma}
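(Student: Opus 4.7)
The plan is to interpret $\log\Phi_n$ as a Riemann-Stieltjes integral against Chebyshev's function $\theta(t) := \sum_{p \leqslant t}\log p$, apply the prime number theorem to replace $d\theta(t)$ by $dt$ modulo an $o(n)$ error, change variable $x = n/t$, and then fold the resulting integral to match the stated expression using the $1$-periodicity of $\nu_0$ together with the identity $\psi'(x) = \sum_{k \geqslant 0}(x+k)^{-2}$.

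Set $A := m_2 - 2\delta_{\min}$ and $C := 3(2m_1 + m_2)$. The defining sum gives
\[
\log\Phi_n \;=\; \int_{\sqrt{Cn}}^{An} \nu_0(n/t)\,d\theta(t),
\]
and since $\nu_0$ is $1$-periodic, bounded, and piecewise constant with finitely many jumps per period, the classical Chudnovsky-Rukhadze-Hata argument (\cite[p.~341]{Ha93}, \cite[Lemma~4.4]{Zu02}) permits replacing $d\theta(t)$ by $dt$ at the cost of $o(n)$. The substitution $x = n/t$ (so $dt = -n\,x^{-2}\,dx$) then yields
\[
\lim_{n \to \infty} \frac{\log\Phi_n}{n} \;=\; \lim_{n \to \infty}\int_{1/A}^{\sqrt{n/C}} \frac{\nu_0(x)}{x^2}\,dx \;=\; \int_{1/A}^{+\infty} \frac{\nu_0(x)}{x^2}\,dx,
\]
where the improper integral at $+\infty$ converges because $\nu_0$ is bounded.

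To match the stated formula, I would split the last integral at $x = 1$ and fold the tail by $1$-periodicity: since $\nu_0(y+k) = \nu_0(y)$ and $\psi'(y) = \sum_{k \geqslant 0}(y+k)^{-2}$,
\[
\int_1^{+\infty} \frac{\nu_0(x)}{x^2}\,dx \;=\; \int_0^1 \nu_0(y)\bigl(\psi'(y) - y^{-2}\bigr)\,dy.
\]
Adding back $\int_{1/A}^{1} \nu_0(x)x^{-2}\,dx$, the integral telescopes into
\[
\int_0^1 \nu_0(x)\psi'(x)\,dx \;-\; \int_0^{1/A} \frac{\nu_0(x)}{x^2}\,dx \;=\; \int_0^1 \nu_0(x)\,d\psi(x) \;+\; \int_0^{1/A} \nu_0(x)\,d(1/x),
\]
which is the claim. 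The main obstacle is the replacement of $d\theta(t)$ by $dt$ in spite of the many jumps of $\nu_0(n/t)$, which is precisely the Chudnovsky-Rukhadze-Hata step. A secondary technical check is the integrability of $\nu_0(x)/x^2$ near $0$, which holds because $\nu_0$ vanishes in a neighbourhood of the origin for the parameter regime used in Theorem \ref{maintheorem}.
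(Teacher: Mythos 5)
Your proof is correct and follows the same Chudnovsky--Rukhadze--Hata route that the paper invokes by citing \cite[p.~341]{Ha93} and \cite[Lemma~4.4]{Zu02} without reproducing the details: rewrite $\log\Phi_n$ as a Stieltjes integral against $\theta(t)$, use the prime number theorem to pass from $d\theta(t)$ to $dt$, substitute $x=n/t$, and fold the tail via $1$-periodicity and $\psi'(x)=\sum_{k\geqslant 0}(x+k)^{-2}$. Both technical points you flag are the right ones, and in particular the vanishing of $\nu_0$ on a right neighbourhood of $0$ does hold (for instance $\nu(x,\tfrac{m_2}{2}x)=0$ for all sufficiently small $x>0$), which makes the two integrals in the statement individually convergent.
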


\bigskip

Now, we study the asymptotics of $S_{n,\theta}$. The following lemma is a modification of \cite[Lemma~4.1]{LY20}, it only involves Stirling's formula.

\begin{lemma}\label{analysislemma}
We have
\[ \lim_{n \rightarrow +\infty} S_{n,1}^{1/n} = g(x_0), \]
where the function $g$ is defined by
\begin{align*}
g(X) = &~108^{2m_1+m_2} \left( \prod_{j=1}^{s+1} \left( m_2 - 2\delta_j \right)^{m_2 - 2\delta_j} \right) \left(2m_1+m_2+X\right)^{4(2m_1+m_2)}  \\
& \times \frac{(m_1+X)^{m_1}}{(m_1+m_2+X)^{m_1+m_2}} \prod_{j=1}^{s+1} \frac{(m_1+\delta_j+X)^{m_1+\delta_j}}{(m_1+m_2-\delta_j+X)^{m_1+m_2-\delta_j}},
\end{align*}
and $x_0$ is the unique positive real solution of $f(X) = 1$ with the function $f$ defined by
\[ f(X) = \left( \frac{2m_1+m_2+X}{X} \right)^4 \frac{m_1+X}{m_1+m_2+X} \prod_{j=1}^{s+1} \frac{m_1+\delta_j+X}{m_1+m_2-\delta_j+X}. \]
Moreover, for any $\theta \in \mathcal{Z}$, we have
\[ \lim_{n \rightarrow +\infty} \frac{S_{n,1}}{S_{n,\theta}} = 1. \]
\end{lemma}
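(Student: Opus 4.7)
The plan is a standard saddle-point/Laplace analysis of the sum $S_{n,\theta}=\sum_{t\geq 1}R_n(t+\theta)$, in the spirit of \cite[Lemma 4.1]{LY20}. Using the factorization $R_n(t)=(2t+m_2n)\,G_{1/2}(t)G_{1/3}(t)G_{2/3}(t)G_{1}^{-}(t)G_{1}^{+}(t)\prod_{j}H_j(t)$ already introduced in the proof of Lemma~\ref{arithmetic_lemma_for_a_i_k}, I would apply Stirling's formula to every Pochhammer symbol and factorial. The $\log n$ contributions cancel exactly between numerator and denominator, giving
\[
\tfrac{1}{n}\log R_n(xn+\theta)=\Phi(x)+O\!\left(\tfrac{\log n}{n}\right)
\]
uniformly on compact subintervals of $(m_1,\infty)$, where $\Phi$ is an explicit combination of $y\log y$ terms together with the absolute constants $2(2m_1+m_2)\log 2+3(2m_1+m_2)\log 3+\sum_j(m_2-2\delta_j)\log(m_2-2\delta_j)$.

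A direct differentiation then shows $\Phi'(x)=\log f(x-m_1)$, so the critical point is $x^{*}=x_0+m_1$ with $f(x_0)=1$. Existence of such a positive root follows from $f(0^{+})=+\infty$ together with the subleading expansion $f(X)=1+\bigl(8m_1-(s-2)m_2+2\sum_j\delta_j\bigr)X^{-1}+O(X^{-2})$ as $X\to\infty$, whose linear coefficient is negative by \eqref{conditionfordelta}. To evaluate $\Phi(x^{*})$ one invokes the saddle relation $f(x_0)=1$: the coefficients of $x$ in the $y\log y$ pieces of $\Phi$ sum to $1-4+4-1+\sum_j(1-1)=0$, so the linear-in-$x_0$ part of $\Phi(x_0+m_1)$ drops out after applying the saddle identity, and the remaining constant-in-$x_0$ coefficients of the $\log$'s produce exactly the exponents $4(2m_1+m_2)$, $m_1$, $-(m_1+m_2)$, $m_1+\delta_j$, $-(m_1+m_2-\delta_j)$ appearing in $g$. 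Hence $\Phi(x^{*})=\log g(x_0)$.

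For the sum itself, $R_n(t+1)>0$ for $t\geq m_1n$ (all Pochhammer factors are positive there) and $\Phi$ is strictly concave in a neighborhood of $x^{*}$, so a standard Laplace estimate yields $S_{n,1}=C n^{-1/2}g(x_0)^{n}(1+o(1))$, and therefore $\lim_{n\to\infty} S_{n,1}^{1/n}=g(x_0)$. For the ratio statement, a direct Stirling computation for the shifted Pochhammers gives
\[
\frac{R_n(xn+\theta)}{R_n(xn+1)}\longrightarrow f(x-m_1)^{\theta-1}
\]
uniformly on compact neighborhoods of $x^{*}$, and at the saddle this limit is $f(x_0)^{\theta-1}=1$. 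Consequently the dominant Laplace contributions to $S_{n,1}$ and $S_{n,\theta}$ agree to leading order, so $S_{n,1}/S_{n,\theta}\to 1$.

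The main obstacle is the algebraic collapse in Step~3: the saddle identity $f(x_0)=1$ is exactly the relation needed to eliminate the $x_0$-dependent combinations of $\log$'s and repackage $\Phi(x^{*})$ into the closed product form of $g(x_0)$. Once this is in hand, the rest is routine Stirling and Laplace bookkeeping, carried out exactly as in \cite[Lemma 4.1]{LY20}, with the extra factors $G_{1/3}$, $G_{2/3}$, and $(t-m_1n)$ contributing only to the $y\log y$ accounting and not to the exponential rate.
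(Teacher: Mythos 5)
Your approach is the same saddle-point/Laplace analysis that the paper uses (following \cite[Lemma 4.1]{LY20}): rewrite $S_{n,\theta}=\sum_{k\ge 0}R_n(m_1n+k+\theta)$, apply Stirling to get $R_n((m_1+\kappa)n+\theta)^{1/n}=n^{O(1/n)}f(\kappa)^\kappa g(\kappa)$, locate the critical point via $f(x_0)=1$, and compare $S_{n,\theta}$ with $S_{n,1}$ through the ratio $f(\kappa)^{\theta-1}$ near the saddle. Your evaluation $\Phi(x^*)=\log g(x_0)$ is correct, though the route is simpler than your ``algebraic collapse'' suggests: with $h(\kappa)=f(\kappa)^\kappa g(\kappa)$ one has $h(x_0)=f(x_0)^{x_0}g(x_0)=g(x_0)$ immediately from $f(x_0)=1$.

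There is, however, a genuine gap. You only establish \emph{existence} of a positive root $x_0$ (from $f(0^+)=+\infty$ and the $O(X^{-1})$ expansion, whose coefficient $8m_1-(s-2)m_2+2\sum_j\delta_j$ is indeed negative by \eqref{conditionfordelta}) together with \emph{local} concavity of $\Phi$ near $x^*$. This does not give uniqueness of $x_0$ (asserted in the lemma), nor does it guarantee that $x^*$ is the \emph{unique global} maximum of $\Phi$ on $(m_1,\infty)$; the latter is exactly what the Laplace estimate $S_{n,1}\sim Cn^{-1/2}g(x_0)^n$ and the localization of the sum to $k\approx x_0 n$ require. The paper closes this gap by a short monotonicity argument: writing $f'(x)/f(x)=u(x)/\bigl(x(2m_1+m_2+x)\bigr)$ with $u$ strictly increasing, $u(0^+)<0$, and $u(+\infty)>0$ shows that $f$ is strictly decreasing then strictly increasing on $(0,\infty)$. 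Since $f(+\infty)=1$, it follows that $\log f>0$ on $(0,x_0)$ and $\log f<0$ on $(x_0,\infty)$, so $h$ (equivalently $\Phi$) has a unique global maximum at $x_0$. Without this global structure the Laplace and localization steps in your argument are not justified.
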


\begin{proof}
Firstly, we show that $f'(x) = 0$ has a unique solution $x=x_1$ in $(0,+\infty)$. By computing the log-derivative of $f(x)$, we obtain that
\begin{align*}
\frac{f'(x)}{f(x)} &= -\frac{4(2m_1+m_2)}{x(2m_1+m_2+x)} + \frac{m_2}{(m_1+x)(m_1+m_2+x)} + \sum_{j=1}^{s+1}\frac{m_2-2\delta_j}{(m_1+\delta_j+x)(m_1+m_2-\delta_j+x)} \\
&= u(x)/(x(2m_1+m_2+x)),
\end{align*}
where
\begin{align*}
u(x) = ~&-4(2m_1+m_2)+m_2\left( 1-\frac{m_1(m_1+m_2)}{(m_1+x)(m_1+m_2+x)} \right) \\
&+ \sum_{j=1}^{s+1} (m_2-2\delta_j)\left( 1- \frac{(m_1+\delta_j)(m_1+m_2-\delta_j)}{(m_1+\delta_j+x)(m_1+m_2-\delta_j+x)} \right).
\end{align*}
Clearly $u(x)$ is increasing on $(0,+\infty)$. Since $u(0^{+}) = -4(2m_1+m_2)<0$ and $u(+\infty) = (s-2)m_2-8m_1 -2\sum_{j=1}^{s+1} \delta_j > 0$ (by \eqref{conditionfordelta}), there is a unique $x_1 \in (0,+\infty)$ such that $u(x_1) = 0$.

Therefore, $f(x)$ is decreasing on $(0,x_1)$ and increasing on $(x_1,+\infty)$. Since $f(0^{+}) = +\infty$ and $f(+\infty) = 1$, we see that there exists a unique $x_0 \in (0,x_1)$ such that $f(x_0) = 1$. Moreover, $f(x) > 1$ for $x \in (0,x_0)$ and $f(x)<1$ for $x \in (x_0,+\infty)$.

The remaining proof works in the same way as in \cite[Lemma~4.1]{LY20}. We only sketch the main steps as follows. Since $R_n(k+\theta) = 0$ for all $k=1,2,\ldots,m_{1}n-1$ and any $\theta \in \mathcal{Z}$, we can write
\[ S_{n,\theta} =  \sum_{k=0}^{\infty} R_n(m_1n+k+\theta). \]
(Each term in the above summation is positive.) Suppose for the moment that $k = \kappa n$ for some constant $\kappa > 0$, then by Stirling's formula in the form $\Gamma(x)=x^{O_{x \rightarrow+\infty}(1)}\left(\frac{x}{e}\right)^{x}$ we derive that, as $n \rightarrow +\infty$,
\begin{align*}
R_n\left( (m_1+\kappa)n+\theta \right)^{1/n} =~& n^{O(1/n)} 108^{2m_1+m_2} \prod_{j=1}^{s+1} (m_2-2\delta_j)^{m_2-2\delta_j} \\
&\times \left( \left( \frac{2m_1+m_2+\kappa}{\kappa} \right)\frac{m_1+\kappa}{m_1+m_2+\kappa} \prod_{j=1}^{s+1} \frac{m_1+\delta_j+\kappa}{m_1+m_2-\delta_j+\kappa}  \right)^{\kappa} \\
&\times (2m_1+m_2+\kappa)^{4(2m_1+m_2)}\frac{(m_1+\kappa)^{m_1}}{(m_1+m_2+\kappa)^{m_1+m_2}} \\
&\times \prod_{j=1}^{s+1}\frac{(m_1+\delta_j+\kappa)^{m_1+\delta_j}}{(m_1+m_2-\delta_j+\kappa)^{m_1+m_2-\delta_j}} \\
=~&n^{O(1/n)}f(\kappa)^{\kappa}g(\kappa).
\end{align*}
Define the function $h(x) = f(x)^{x}g(x)$ on $(0,+\infty)$, then a direct computation gives $\frac{h'(x)}{h(x)} = \log f(x) + x\frac{f'(x)}{f(x)} + \frac{g'(x)}{g(x)} = \log f(x)$. So $h(x)$ is increasing on $(0,x_0)$, decreasing on $(x_0,+\infty)$ and $h(x_0)=g(x_0)$. Based on the above observation, we can show that $S_{n,\theta} = n^{O(1)}h(x_0)^n$ so that $\lim_{n \rightarrow +\infty} S_{n,1}^{1/n} = h(x_0) = g(x_0)$. Moreover, for any prescribed sufficiently small $\varepsilon_0 > 0$ and $\theta \in \mathcal{Z}$, it can be shown that
\begin{align*}
S_{n,1} &= (1+o(1))\sum_{(x_0-\varepsilon_0)n \leqslant k \leqslant (x_0+\varepsilon_0)n} R_n(m_1n+k+1),  \\
S_{n,\theta} &= (1+o(1))\sum_{(x_0-\varepsilon_0)n \leqslant k \leqslant (x_0+\varepsilon_0)n} R_n(m_1n+k+\theta),
\end{align*}
as $n \rightarrow +\infty$. Then by using $\frac{\Gamma(x+1-\theta)}{\Gamma(x)}=\left(1+o_{x \rightarrow+\infty}(1)\right) x^{1-\theta}$ (which is a corollary of Stirling's formula), we obtain uniformly for $(x_0-\varepsilon_0)n \leqslant k \leqslant (x_0+\varepsilon_0)n$ that, as $n \rightarrow +\infty$,
\[ \frac{R_n(m_1n+k+1)}{R_n(m_1n+k+\theta)} = (1+o(1))f\left( k/n \right)^{1-\theta}. \]
Therefore,
\[ f(x_0+\varepsilon_0)^{1-\theta} \leqslant \liminf_{n \rightarrow +\infty} \frac{S_{n,1}}{S_{n,\theta}} \leqslant \limsup_{n \rightarrow +\infty} \frac{S_{n,1}}{S_{n,\theta}} \leqslant f(x_0-\varepsilon_0)^{1-\theta}. \]
By letting $\varepsilon_0 \rightarrow 0^{+}$ we finally obtain that $\lim_{n \rightarrow +\infty} \frac{S_{n,1}}{S_{n,\theta}} = 1$.
\end{proof}

\bigskip

Now, we prove Theorem \ref{maintheorem}.
\begin{proof}[Proof of Theorem \ref{maintheorem}]
Since $\zeta(3)$ is irrational, Theorem \ref{maintheorem} is equivalent to the following assertion: there are at least three numbers among $\zeta(3),\zeta(5),\ldots,\zeta(35)$ that are irrational.

Take $s = 35$. Suppose that there were only two odd integers $i_1 = 3$ (by Ap{\' e}ry) and $i_2 \in \{ 5,7,9,11 \}$ (by Zudilin \cite{Zu01}) such that $\zeta(i_1)$ and $\zeta(i_2)$ are irrational, and for all $i \in \{ 3,5,\ldots,35 \} \setminus\{ i_1,i_2\}$, $\zeta(i)$ is rational; let $A$ be the common denominator of these rational $\zeta(i)$. Since the generalized Vandermonde matrix
\[\left(\begin{array}{rrr}
1 & 2~ & 3~ \\
1 & 2^{i_1} & 3^{i_1} \\
1 & 2^{i_2} & 3^{i_2}
\end{array}\right)
\]
is invertible, there exist $w_1,w_2,w_3 \in \mathbb{Z}$ such that $w_1+2^{i_1}w_2 + 3^{i_1}w_3 = w_1+2^{i_2}w_2+3^{i_2}w_3 = 0$ and $w_1+2w_2+3w_3 \neq 0$. Since
\[ \sum_{k=1}^{b} \zeta\left(i, \frac{k}{b}\right)=\sum_{k=1}^{b} \sum_{m=0}^{\infty} \frac{b^{i}}{(m b+k)^{i}}=b^{i} \zeta(i), \]
we derive that for any $b \in \{ 1,2,3 \}$,
\[ \widehat{S}_{n, b}:=\sum_{k=1}^{b} S_{n, k / b}=\sum_{k=1}^{b} \rho_{0, k / b}+\sum_{i \in \{ 3,5,\ldots,35 \}} \rho_{i} b^{i} \zeta(i) \]
is a linear combination of $1$ and the Riemann zeta values. By Lemma \ref{analysislemma}, we have $\widehat{S}_{n, b} = (b+o(1))S_{n,1}$ as $n \rightarrow +\infty$. Let
\[ \widetilde{S}_{n}:=\sum_{b=1}^{3} w_{b} \widehat{S}_{n, b}; \]
then
\[ \widetilde{S}_{n}=\sum_{b=1}^{3} w_{b} \sum_{k=1}^{b} \rho_{0, k / b}+\sum_{i \in \{ 3,5,\ldots,35 \} \setminus\{ i_1,i_2\}}\left(\sum_{b=1}^{3} w_{b} b^{i}\right) \rho_{i} \zeta(i) \]
and
\begin{equation}\label{widetilde_S_n}
\widetilde{S}_{n}= (w_1+2w_2+3w_3+o(1))S_{n,1} \quad\text{~with~} w_1+2w_2+3w_3 \neq 0.
\end{equation}
By Lemma \ref{arithmeticlemma}, we have
\begin{equation}\label{APhiDSinZ}
\left( A\Phi_{n}^{-1}  \prod_{j=1}^{s+1} D_{\max\{ (m_2-2\delta_{\min})n, (m_2 - \delta_j)n \}+1} \right) \widetilde{S}_{n} \in \mathbb{Z}
\end{equation}
for any positive even integer $n>s^2$.

On the other hand, by Lemma \ref{Phi}, we have $\lim_{n\rightarrow +\infty}  \Phi_{n}^{1/n} = \exp(C_1)$, where
\begin{equation}\label{C_1}
C_1 :=  \int_{0}^{1} \nu_0(x) {\rm d} \psi(x) + \int_{0}^{\frac{1}{m_2-2\delta_{\min}}} \nu_0(x) {\rm d} \left( \frac{1}{x} \right).
\end{equation}
The prime number theorem ($D_{N} = \exp((1+o_{N \rightarrow +\infty}(1))N)$) implies that
\begin{equation}\label{PNT}
\lim_{n\rightarrow +\infty} \left(\prod_{j=1}^{s+1} D_{\max\{ (m_2-2\delta_{\min})n, (m_2 - \delta_j)n \}+1} \right)^{1/n} = \exp\left(\sum_{j=1}^{s+1} \max\{ m_2- 2\delta_{\min},m_2 - \delta_j \}\right).
\end{equation}
By \eqref{widetilde_S_n} and Lemma \ref{analysislemma}, we have
\begin{equation}\label{C_2_2}
\lim_{n\rightarrow +\infty} \widetilde{S}_{n}^{1/n} = g(x_0).
\end{equation}
Putting \eqref{C_1}, \eqref{PNT} and \eqref{C_2_2} together, we obtain that
\begin{equation}\label{Exp_C_2_C_1}
\lim_{n \rightarrow +\infty} \left(\left( A\Phi_{n}^{-1}  \prod_{j=1}^{s+1} D_{\max\{ (m_2-2\delta_{\min})n, (m_2 - \delta_j)n \}+1} \right) \widetilde{S}_{n} \right)^{1/n} = e^{-C_1+C_2},
\end{equation}
where $C_2$ is defined as follows:
\begin{align*}
C_2 := &~ \left(\sum_{j=1}^{s+1} \max\{ m_2- 2\delta_{\min},m_2 - \delta_j \} \right) + (2m_1+m_2)\log 108 + \left(\sum_{j=1}^{s+1} (m_2-2\delta_j) \log (m_2-2\delta_j) \right) \\
&+ 4(2m_1+m_2)\log (2m_1+m_2+x_0) + m_1 \log (m_1+x_0) - (m_1+m_2) \log(m_1+m_2+x_0) \\
&+ \left( \sum_{j=1}^{s+1} \left( (m_1+\delta_j)\log(m_1+\delta_j+x_0) - (m_1+m_2-\delta_j) \log (m_1+m_2-\delta_j+x_0) \right) \right).
\end{align*}

If $C_1 > C_2$, we will obtain a contradiction of \eqref{APhiDSinZ} and \eqref{Exp_C_2_C_1}. Take the parameters as follows: $s=35$; $m_1=209$, $m_2=243$; $\delta_j =4$ for $1 \leqslant j \leqslant 5$; $\delta_j = j-1$ for $6 \leqslant j \leqslant 11$; $\delta_j = 2j-12$ for $12 \leqslant j \leqslant 32$; and $\delta_{j} = 4j-76$ for $33 \leqslant j \leqslant 36$. By a MATLAB program, we find that $x_0 = 2.89493833\ldots$ and
\[ C_1 = 16779.9312\ldots > C_2 = 16779.2826\ldots.\]
This contradiction completes the proof of Theorem \ref{maintheorem}.
\end{proof}

We will describe the MATLAB code and give a website link to it in the next section. The above parameters are found by random search and trial-and-error.

We conclude this section by some remarks about Theorem \ref{maintheorem}.

\begin{remark}
If one elaborates the method in \cite{RZ20}, some first attempts suggest that one cannot obtain a result better than Theorem \ref{maintheorem}. However, we did not put our effort on figuring it out.
\end{remark}

\begin{remark}
It is possible that the arithmetic behavior of $\rho_i$ and $\rho_{0,\theta}$ is even better, by considering certain hypergeometric transformations underlying the construction. See the explanation of the ``denominator conjecture'' in \cite[Chapitre 17]{KR07}. It is tremendously difficult to put such things into consideration in this paper.
\end{remark}

\begin{remark}\label{rmk}
We have some other choices for the denominator factor $n!^{8m_1+3m_2}$ of $R_n(t)$, due to different arithmetic normalization of the building blocks $G_{1/2}(t),G_{1/3}(t),G_{2/3}(t),G_{1}^{-}(t)$ and $G_1^{+}(t)$ in the proof of Lemma \ref{arithmetic_lemma_for_a_i_k}. Recall that $G_{1/2}(t)$ is the product of the following $2m_1+m_2$ polynomials:
\begin{align*}
&\frac{4^{n}(t-m_{1}n+1/2)_n}{n!}, \frac{4^{n}(t-m_{1}n+n+1/2)_n}{n!}, \frac{4^{n}(t-m_{1}n+2n+1/2)_n}{n!},\ldots, \\
&\frac{4^{n}(t+(m_1+m_2-1)n+1/2)_n}{n!}.
\end{align*}
In general, we can replace $G_{1/2}(t)$ by the product of
\begin{align}\label{replacement}
&\frac{4^{u_{1}n}(t-m_{1}n+1/2)_{u_{1}n}}{(u_{1}n)!}, \frac{4^{u_{2}n}(t-m_{1}n+u_{1}n+1/2)_{u_{2}n}}{(u_{2}n)!},\frac{4^{u_{3}n}(t-m_{1}n+u_{1}n+u_{2}n+1/2)_{u_{3}n}}{(u_{3}n)!},\ldots, \notag \\
&\frac{4^{u_{I}n}(t-m_{1}n+u_{1}n+u_{2}n+\cdots+u_{I-1}n+1/2)_{u_{I}n}}{(u_{I}n)!},
\end{align}
where $u_1,u_2,\ldots,u_I$ are arbitrary positive integers satisfying the conditions
\begin{equation}\label{conditionsforbuildingblocks}
\sum_{i=1}^{I} u_i = \frac{1}{n}\deg G_{1/2}(t) =(2m_1+m_2) \quad \text{and} \quad \max_{1 \leqslant i \leqslant I} u_i \leqslant m_2-2\delta_{\min}. 
\end{equation}
We can replace $G_{1/3}(t)$, $G_{2/3}(t)$, $G_{1}^{-}(t)$ and $G_{1}^{+}(t)$ in a similar way.

Such replacements will not affect the finial result of Theorem \ref{maintheorem}. We explain it through a simple example below, and there is no difficulty for the general case. Suppose that we take $u_1=2$ and $u_2=\cdots=u_{I}=1$ in \eqref{replacement} for $G_{1/2}(t)$; namely, $G_{1/2}(t)$ is replaced by $\widetilde{G}_{1/2}(t) = \frac{n!^2}{(2n)!}G_{1/2}(t)$ (so does $R_n(t)$).
Let
\[ F(t) = \frac{4^{2n}(t-m_{1}n+1/2)_{2n}}{(2n)!}.  \]
As in the proof of Lemma \ref{arithmetic_lemma_for_a_i_k}, we have
\[ D_{2n}^{\ell} \cdot \frac{1}{\ell!} F^{(\ell)}(-k) \in \mathbb{Z}, \]
so we still have  
\begin{equation}\label{weak_arithmetic_lemma_in_remark}
D_{(m_2-2\delta_{\min})n}^{s+1-i} a_{i,k} \in \mathbb{Z},
\end{equation}
provided that $2 \leqslant m_2-2\delta_{\min}$. (In the general case, the latter condition in \eqref{conditionsforbuildingblocks} is used to insure \eqref{weak_arithmetic_lemma_in_remark}.) The factor $\Phi_n$ and the functions $\nu(x,y),\nu_0(x,y),g(x)$ change slightly according to the replacement. Eventually, $C_2$ becomes $\widetilde{C}_2 = C_{2} - 2\log 2$ because of
\[ \lim_{n \rightarrow +\infty} \left( \frac{(2n)!}{n!^2} \right)^{1/n} = 2^2. \]
Meanwhile, $\Phi_n$ becomes
\[ \widetilde{\Phi}_n = \Phi_n \Bigg/ \prod_{p > \sqrt{3(2m_1+m_2)n}} p^{\operatorname{ord}_{p}((2n)!/n!^2)}.\]
Note that 
\[ \prod_{p \leqslant \sqrt{3(2m_1+m_2)n}} p^{\operatorname{ord}_{p}((2n)!/n!^2)} = \exp\left( O(\pi(\sqrt{3(2m_1+m_2)n}) \cdot \log n) \right)= \exp(O(\sqrt{n}))  \]
is negligible, so $C_1 = \lim_{n \rightarrow +\infty} (\log \Phi_n)/n$ becomes $\widetilde{C}_1 = C_1 - 2\log 2$. Thus, $-C_1+C_2$ remains unchanged, so does Theorem \ref{maintheorem}.

\end{remark}

\section{Computational Aspect}\label{Computation_Section}
In this section, we explain how to calculate $C_1$ numerically. (The calculations for $x_0$ and $C_2$ are straightforward.)

The function $\nu(x,y)$ (defined in Lemma \ref{arithmetic_lemma_for_a_i_k}) can be rewritten as
\[ \nu(x,y) = \sum_{i=1}^{H}  h_{i,1}\lfloor h_{i,2}x+h_{i,3}y \rfloor , \]
where $h$ is the $H \times 3$ matrix with integral entries:
\[h = \left(\begin{array}{rrr}
	1 & m_2 - 2\delta_{1} & 0 \\
	-1 & -\delta_{1} & 1 \\
	-1 & m_2-\delta_{1} & -1\\
	 \ldots&\ldots&\ldots\\
	 1 & m_2 - 2\delta_{j} & 0 \\
	 -1 & -\delta_{j} & 1 \\
	 -1 & m_2-\delta_{j} & -1\\
	 \ldots&\ldots&\ldots\\
    1 & m_2 - 2\delta_{s+1} & 0 \\
	-1 & -\delta_{s+1} & 1 \\
	-1 & m_2-\delta_{s+1} & -1\\
	 1 & 2m_{1} & 2\\
	 -1 & m_{1} & 1\\
	1 & 2(m_1+m_2) & -2\\
	-1 & (m_1+m_2) & -1\\
	1 & 3m_{1} & 3\\
	1 & 3(m_1+m_2) & -3\\
	-1 & 0 & 1\\
	-1 & m_{2} & -1\\
	-(8m_1 + 3m_2)&1 &0\\
\end{array}\right).
\]
Clearly, $\nu(x,y)$ is $1$-periodic in both variables $x$ and $y$. In the $xy$-plane, all the lines $h_{i,2}x + h_{i,3}y = k$, ($i=1,\ldots,H$, $k \in \mathbb{Z}$), cut apart the $xy$-plane into polygons, and $\nu(x,y)$ is constant in the interior of each polygon. Moreover, the value of $\nu(x,y)$ at a non-vertex point on the common side of two polygons is equal to the $\nu$-value of the interior points of one of these two polygons. Thus the function $\nu_0(x)=\min_{y \in \mathbb{R}} \nu(x,y)$ is $1$-periodic, and any of its discontinuities must be the $x$-coordinate of the intersection point of some pair of lines.

Let $X=\bigcup_{q_{i,j}\neq 0}X_{i,j}$ where $X_{i,j}=\{\frac{k}{q_{i,j}} \mid k\in\mathbb{Z}, \ 0 \leqslant k \leqslant q_{i,j} \}$ and $q_{i,j}$ is defined as
\[
q_{i,j} = \operatorname{abs} \left(
\left|\begin{array}{cc}
    h_{i,2}   & h_{i,3}\\
    h_{j,2}   & h_{j,3}\\
\end{array}\right| \right).
\]
Suppose $X=\{0=x_0<x_1<x_2<\cdots<x_l=1\}$, then $X$ contains all the discontinuities of $\nu_0(x)$ in the interval $[0,1]$, and there exists an index $l_{\text{mid}}$ such that $x_{l_{\text{mid}}}=\frac{1}{m_2-2\delta_{\min}} \in X$. Then $\nu_0(x)$ is constant on each interval $(x_{i-1},x_{i})$, and we can express the integration \eqref{C_1} as a finite summation:
\[ C_1 = \sum_{i=1}^{l} \nu_0\left(\frac{x_{i-1}+x_{i}}{2}\right)\cdot(\psi(x_{i})-\psi(x_{i-1})) +
\sum_{i=1}^{l_{\text{mid}}} \nu_0\left(\frac{x_{i-1}+x_{i}}{2}\right)\cdot\left(\frac{1}{x_{i}}-\frac{1}{x_{i-1}}\right). \]
(To reduce the effect of round off errors, we choose to calculate $\nu_0\left(\frac{x_{i-1}+x_{i}}{2}\right)$ on each interval $(x_{i-1},x_i)$.)

For a fixed $\hat{x} = \frac{x_{i-1}+x_{i}}{2}$, the function $y \mapsto \nu(\hat{x},y)$ is piecewise constant and any of its discontinuities must be the $y$-coordinate of the intersection point of the line $x = \hat{x}$ and some line in the form $h_{j,2}x + h_{j,3}y = k$. Let $Y = Y(\hat{x}) = \bigcup_{h_{j,3}\neq 0}Y_{j}$, where
\[
Y_j=\left\{\frac{k-h_{j,2}\hat{x}}{h_{j,3}} ~\Bigg|~ k \in \mathbb{Z},~
\min\{h_{j,2}\hat{x},h_{j,2}\hat{x}+h_{j,3}\} \leqslant k \leqslant \max\{h_{j,2}\hat{x},h_{j,2}\hat{x}+h_{j,3}\}
 \right\},
\]
then $Y$ contains all the discontinuities of the function $y \mapsto \nu(\hat{x},y)$ in $[0,1]$. Let $Y'$ be the set of middle points of two consecutive numbers in $Y$, then $\nu_0(\hat{x}) = \min_{y \in Y'} \nu(\hat{x},y)$ is the minimum of finitely many terms. In this way, we can calculate the value of $C_1$.

The MATLAB code \textsf{zeta35.m} can be downloaded at \url{https://github.com/lzhou-xyz/zeta35/}. It takes around one minute on a personal laptop to obtain the result.

\section{A related problem about Catalan's constant}
It is natural to generalize the results about the Riemann zeta values to the Dirichlet $L$-values. We refer the reader to Fischler \cite{Fi20} for recent progress. In the following, we consider the Dirichlet beta function; that is,
\[ \beta(s) = L(s,\chi_4) = \sum_{j=0}^{\infty} \frac{(-1)^j}{(2j+1)^s},~\operatorname{Re}(s) > 0.  \]
As in the Riemann zeta case, a half of the $\beta$-values at positive integers are ``trivially'' transcendental: Euler showed that $\beta(2k+1)$ is a non-zero rational multiple of $\pi^{2k+1}$ for any non-negative integer $k$, so $\beta(2k+1)$ is transcendental. We know little about $\beta(2k)$.

The constant $\beta(2)$ is called Catalan's constant. Unlike Ap{\'e}ry's constant $\zeta(3)$, we still do not know whether $\beta(2)$ is irrational or not. In 2003, Rivoal and Zudilin \cite{RZ03} showed that at least one of $\beta(2),\beta(4),\ldots,\beta(14)$ is irrational. Recently, Zudilin \cite{Zu19} improved $14$ to $12$. By using the same constructions in \cite{Zu19} with just a different collection of parameters, we find that it can be improved further.

\begin{theorem}\label{secondtheorem}
At least one of $\beta(2),\beta(4),\beta(6),\beta(8),\beta(10)$ is irrational.
\end{theorem}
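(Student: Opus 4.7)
The plan is to invoke verbatim the hypergeometric construction of Zudilin \cite{Zu19}, which produces a sequence of small $\mathbb{Z}$-linear forms in $1,\beta(2),\beta(4),\ldots,\beta(2K)$, and to exhibit a sharper collection of integer parameters that suffices for $K=5$. Zudilin's construction starts from a rational function $R_n(t)$ whose Pochhammer blocks are shifted by $1/4$ and $3/4$ (mirroring the conductor-$4$ character $\chi_4$), equipped with a symmetry $R_n(-t-Mn)=\pm R_n(t)$ that kills contributions of the odd-index beta values $\beta(1),\beta(3),\ldots,\beta(2K-1)$; those are rational multiples of odd powers of $\pi$ and would otherwise pollute the linear form. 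Summing $R_n(t)$ over the appropriate arithmetic progression yields a real number $S_n=\rho_0+\sum_{k=1}^{K}\rho_{2k}\beta(2k)$.

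First I would recopy, in the $\chi_4$ setup, the arithmetic analysis parallel to Lemmas \ref{arithmetic_lemma_for_a_i_k}--\ref{arithmeticlemma}: express the partial-fraction coefficients $a_{i,k}$ as iterated derivatives of building-block ratios, obtain an elementary denominator bound $D_N^{2K+1-i}a_{i,k}\in\mathbb{Z}$ for some $N$ linear in $n$, and install the Chudnovsky--Rukhadze--Hata cancellation factor $\Phi_n$, whose exponential rate of growth is governed by an integral of the form $C_1=\int_0^1\nu_0(x)\,d\psi(x)+\int_0^{1/M}\nu_0(x)\,d(1/x)$ for a $1$-periodic floor-sum function $\nu_0$. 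Second, I would rerun the Stirling asymptotic analysis of Lemma \ref{analysislemma} in this character setting, giving $\lim_{n\to\infty}|S_n|^{1/n}=g(x_0)$ with an explicit $g$ and unique positive root $x_0$ of an equation $f(X)=1$. Assuming rationality of all five values $\beta(2),\ldots,\beta(10)$ with a common denominator $A$, the product $A\Lambda_n S_n$ becomes an integer, where $\Lambda_n$ collects the $D_N^\bullet/\Phi_n$ factors; irrationality of at least one $\beta(2k)$ then follows from an inequality $C_1>C_2$, where $C_2$ gathers the exponential growth rates of $|S_n|$ and of $\Lambda_n$.

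The remaining and main obstacle is purely numerical: search the admissible parameter region (the analogue of condition \eqref{conditionfordelta} for the $\chi_4$ construction) for an integer vector $(m_1,m_2;\delta_1,\ldots)$ that drives $C_1$ strictly above $C_2$ when $K=5$. The acknowledgement that a referee supplied the final parameters suggests that the feasible region is thin and naive search alone is inadequate; I would combine random integer sampling with a local continuous optimisation on the relaxation of the integrality constraints, then verify each promising candidate via the floor-sum computation of $\nu_0$ outlined in Section \ref{Computation_Section}. Once such a vector is in hand, the conclusion copies the final paragraph of the proof of Theorem \ref{maintheorem} almost verbatim: $A\Lambda_n S_n$ is a nonzero integer tending to $0$, yielding the required contradiction.
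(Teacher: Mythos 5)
Your outline captures the skeleton of the argument: reuse a Zudilin-type rational-function construction adapted to $\chi_4$, establish an arithmetic lemma for the partial-fraction coefficients with a $\Phi_n$-type cancellation, compute the exponential growth rate of the linear form, and search numerically for parameters that make the two rates cross. But the phrase ``invoke verbatim the hypergeometric construction of Zudilin'' is precisely where the plan would fail. The paper does \emph{not} take Zudilin's normalization $\gamma_n$ as is; it replaces it by $\widetilde\gamma_n = 4^{h_0-1}\prod_{j}(h_0-2h_j)!/n!^{\eta_0}$, decomposing the relevant Pochhammer products into $\eta_0$ length-$n$ blocks in the spirit of Remark~\ref{rmk}. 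The payoff is that the denominator bound in the arithmetic lemma then uses $d_{\widetilde M}^{\,s-i}$ with $\widetilde M = h_0-2N-1$, whereas Zudilin's original gives $d_M^{\,s-i}$ with $M = \max\{h_0-2N-1,\,h_1-\tfrac12\}$. For the winning parameters $(\eta_0,\ldots,\eta_{11})=(94,32,32,32,32,33,34,35,36,37,38,39)$ one has $\widetilde M = 30n$ while $M = 32n$; replacing $d_{\widetilde M}^{11}$ by $d_M^{11}$ costs a factor of roughly $e^{22n}$, which overwhelms the actual margin $e^{(118.62\ldots - 118.84\ldots)n}\approx e^{-0.21n}$ by two orders of magnitude. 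So a parameter search inside the unmodified Zudilin construction cannot reach $K=5$: the normalization change is the enabling step, not a cosmetic one, and it must be identified and justified before any numerics are run. Two smaller corrections: in the construction actually used, the Pochhammer blocks are shifted by $1/2$ (with $\chi_4$ entering through the alternating sign in $\widetilde r_n = \sum_\nu(-1)^\nu\widetilde R_n(\nu)$), not by $1/4$ and $3/4$; and the paper does not ``rerun'' Lemmas~\ref{arithmetic_lemma_for_a_i_k}--\ref{analysislemma} in the $\beta$-setting --- those are tailored to the roots-of-unity construction of Section~2 --- it instead patches the corresponding lemmas of [Zu19,\,\S 3] with $\widetilde M$, $\widetilde\varphi$, $\widetilde\Phi_n$ in place of $M$, $\varphi$, $\Phi_n$.
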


The proof is identical to \cite[\S 3]{Zu19} except for small modifications: we need to take a different normalization factor of the rational function. In the following, we repeat the process of \cite[\S 3]{Zu19} and use tilde notation to indicate modifications.

Take $s=11$ instead of $s=13$. Let $(\eta_0,\eta_1,\ldots,\eta_{s})$ be a collection of integral parameters satisfying
\[ 0<\eta_{j}<\frac{1}{2} \eta_{0} \text {~for~} j=1, \ldots, s, \quad \text { and } \quad \eta_{1}+\eta_{2}+\cdots+\eta_{s} \leqslant \frac{s-1}{2} \eta_{0}. \]
We assign for each positive even integer $n$ the collection
\[ h_{0}=\eta_{0} n+1, \quad h_{j}=\eta_{j} n+\frac{1}{2} \text {~for~} j=1, \ldots, s. \]

Let
\[ \widetilde{R}_{n}(t)=\widetilde{R}_{n, \mathbf{\eta}}(t)=\widetilde{\gamma}_{n} \cdot\left(2 t+h_{0}\right) \frac{(t+1)_{h_{0}-1}}{\prod_{j=1}^{s}\left(t+h_{j}\right)_{1+h_{0}-2 h_{j}}}, \]
where the normalization factor $\widetilde{\gamma}_{n}$ is different from $\gamma_{n}$ in \cite[\S 3]{Zu19}:
\[ \widetilde{\gamma}_{n} =4^{h_{0}-1} \frac{\prod_{j=1}^{s}\left(h_{0}-2 h_{j}\right) !}{n !^{\eta_0}} .\]

Define the sum
\[ \widetilde{r}_{n}=\sum_{\nu=0}^{\infty} (-1)^{\nu} \widetilde{R}_{n}(\nu). \]
Let
\[ N=\min _{1 \leqslant j \leqslant s}\left\{h_{j}-\frac{1}{2}\right\} \quad \text { and } \quad \widetilde{M} = h_0 - 2N-1.  \]
Note that $\widetilde{M}$ is different from $M=\max \left\{h_{0}-2 N-1, h_{1}-\frac{1}{2}\right\}$ (in the third line before Lemma 4 of \cite[\S 3]{Zu19}.)

The different normalization factor $\widetilde{\gamma}_{n}$ induces the obvious change in \cite[\S 3, Lemma 2]{Zu19}:
\[  \lim _{n \rightarrow \infty} \widetilde{r}_{n}^{1 / n}=\left(4 \eta_{0}\right)^{\eta_{0}} \cdot \max _{t \in[0,1]^{s}} \frac{\prod_{j=1}^{s} t_{j}^{\eta_{j}}\left(1-t_{j}\right)^{\eta_{0}-2 \eta_{j}}}{\left(1+t_{1} t_{2} \cdots t_{s}\right)^{\eta_{0}}},\]
and $\varphi(x,y)$ in \cite[\S 3, Lemma 4]{Zu19} is replaced by
\begin{align*}
\widetilde{\varphi}(x, y) =&~\left\lfloor 2\left(\eta_{0} x-y\right)\right\rfloor+\lfloor 2 y\rfloor-\left\lfloor\eta_{0} x-y\right\rfloor-\lfloor y\rfloor-\eta_{0}\left\lfloor x\right\rfloor \\
&+\sum_{j=1}^{s}\left(\left\lfloor\left(\eta_{0}-2 \eta_{j}\right) x\right\rfloor-\left\lfloor y-\eta_{j} x\right\rfloor-\left\lfloor\left(\eta_{0}-\eta_{j}\right) x-y\right\rfloor\right).
\end{align*}
Accordingly, $\varphi_0(x)$ and $\Phi_n$ in \cite[\S 3, Lemma 4]{Zu19} are replaced by $\widetilde{\varphi}_0(x) = \min _{0 \leqslant y<1} \widetilde{\varphi}(x, y)$ and
\[ \widetilde{\Phi}_{n}=\prod_{\sqrt{2 h_{0}}<p \leqslant \widetilde{M}} p^{\widetilde{\varphi}_{0}(n / p)}, \]
respectively.

Suppose that the partial-fraction decomposition of $\widetilde{R}_{n}(t)$ is
\[ \widetilde{R}_{n}(t) = \sum_{i=1}^{s} \sum_{k=N}^{h_{0}-N-1} \frac{\widetilde{a}_{i, k}}{\left(t+k+\frac{1}{2}\right)^{i}}. \]
As usual, we denote by $d_{\widetilde{M}}$ the least common multiple of $1,2,\ldots,\widetilde{M}$. Most importantly, Lemma $4$ of \cite[\S 3]{Zu19} can be interpreted as
\begin{equation}\label{Lemma4_for_beta}
\widetilde{\Phi}_{n}^{-1} d_{\widetilde{M}}^{s-i} \widetilde{a}_{i,k} \in \mathbb{Z}
\end{equation}
for any $i=1, \ldots, s$ and $N \leqslant k \leqslant h_{0}-N-1$. (For the parameters in \cite[\S 3]{Zu19}, we have $\widetilde{M} = M$. But for our parameters below, we have $\widetilde{M} < M$. This is the reason that we take the different normalization factor $\widetilde{\gamma}_{n}$.) To prove \eqref{Lemma4_for_beta}, we need to replace the product of three integer-valued polynomials
\[ \frac{4^{h_{1}^{*}}\left(t+\frac{1}{2}\right)_{h_{1}^{*}}}{h_{1}^{*} !}, \frac{4^{h_{0}-2 h_{1}}\left(t+h_{1}^{*}+\frac{1}{2}\right)_{h_{0}-2 h_{1}}}{\left(h_{0}-2 h_{1}\right) !}, \frac{4^{h_{1}^{*}}\left(t+h_{0}-h_{1}^{*}-\frac{1}{2}\right)_{h_{1}^{*}}}{h_{1}^{*} !} \]
(where $h_{1}^{*}=h_{1}-\frac{1}{2}=\eta_{1} n$) in the proof of Lemma $4$ of \cite[\S 3]{Zu19} by the product of the following $\eta_{0}$ polynomials:
\[ \frac{4^{n}\left(t+\frac{1}{2}\right)_{n}}{n !}, \frac{4^{n}\left(t+n+\frac{1}{2}\right)_{n}}{n !}, \frac{4^{n}\left(t+2 n+\frac{1}{2}\right)_{n}}{n !}, \ldots, \frac{4^{n}\left(t+h_{0}-n-\frac{1}{2}\right)_{n}}{n !}. \]
(See also Remark \ref{rmk} for some explanation about such replacements.)

Finally, by taking the parameters as
\[ \left(\eta_{0}, \eta_{1}, \ldots, \eta_{11}\right)=(94,32,32,32,32,33,34,35,36,37,38,39), \]
we obtain that
\[ \lim _{n \rightarrow +\infty} \widetilde{r}_{n}^{1 / n}=\exp (118.624566 \ldots) \]
and
\[\lim _{n \rightarrow +\infty}\left(\widetilde{\Phi}_{n}^{-1} d_{\widetilde{M}}^{11}\right)^{1 / n}=\exp (-118.836817 \ldots).\]
(The MATLAB code \textsf{beta10.m} can be downloaded at \url{https://github.com/lzhou-xyz/zeta35/}.) This means that the positive linear forms
\[
\widetilde{\Phi}_{n}^{-1} d_{\widetilde{M}}^{11} \widetilde{r}_{n} \in \mathbb{Z} \beta(2)+\mathbb{Z} \beta(4)+\cdots+\mathbb{Z} \beta(10)+\mathbb{Z}
\]
tend to $0$ as $n \rightarrow +\infty$. Thus, one of the $\beta$-values considered is irrational. The proof of Theorem \ref{secondtheorem} is complete.

\end{document}